\numberwithin{theorem}{section}
\newcommand{\TheTitle}{Proximally Guided Stochastic Subgradient Method for Nonsmooth, Nonconvex Problems} 
\newcommand{\TheAuthors}{D. Davis and B. Grimmer}
\title{{\TheTitle}\thanks{Submitted to the editors 9/11/2017.
\funding{This material is based upon work supported by the National Science Foundation under Award No. 1502405 and by the National Science Foundation Graduate Research Fellowship under Grant No. DGE-1650441.}}}
\author{
  Damek Davis\thanks{Cornell University, Ithaca, NY
    (\email{dsd95@cornell.edu}, \url{https://people.orie.cornell.edu/dsd95}).}
  \and
  Benjamin Grimmer\thanks{Cornell University, Ithaca, NY (\email{bdg79@cornell.edu},
	\url{https://people.orie.cornell.edu/bdg79}).}
}
\newcommand\numberthis{\addtocounter{equation}{1}\tag{\theequation}}
\DeclarePairedDelimiter{\dotp}{\langle}{\rangle}
\newcommand{\remove}[1]{{}}
\newcommand{\cB}{{\mathcal{B}}}
\newcommand{\cC}{{\mathcal{C}}}
\newcommand{\cF}{{\mathcal{F}}}
\newcommand{\cX}{{\mathcal{X}}}
\newcommand{\RR}{\mathbb{R}}
\newcommand{\NN}{\mathbb{N}}
\newcommand{\EE}{\mathbb{E}}
\newcommand{\dom}{{\mathrm{dom}}} % domain
\newcommand{\minimize}{\text{minimize}}
\newcommand{\bc}{\begin{center}}
\newcommand{\ec}{\end{center}}
\newcommand{\bdm}{\begin{displaymath}}
\newcommand{\edm}{\end{displaymath}}
\newcommand{\beq}{\begin{equation}}
\newcommand{\eeq}{\end{equation}}
\newcommand{\bfl}{\begin{flushleft}}
\newcommand{\efl}{\end{flushleft}}
\newcommand{\bt}{\begin{tabbing}}
\newcommand{\et}{\end{tabbing}}
\newcommand{\beqn}{\begin{align}}
\newcommand{\eeqn}{\end{align}}
\newcommand{\beqs}{\begin{align*}} % no equation numbers
\newcommand{\eeqs}{\end{align*}}  % no equation numbers
\newtheorem{assumption}{Assumption}
\newtheorem{example}{Example}
\newtheorem{remark}{Remark}
\newcommand{\prox}[3] {\mathbf{prox}_{#1 #2}(#3)}
\newcommand{\argmin}{\mathrm{argmin}}
\newcommand{\PP}{\mathbb{P}}
\newcommand{\dist}{\mathrm{dist}}
\newcommand{\epi}{\mathrm{epi}}
\newcommand{\proj}{\mathrm{proj}}
\newcommand{\sign}{\mathrm{sign}}
\newcommand*\Let[2]{\State #1 $\gets$ #2}
\algrenewcommand\algorithmicrequire{\textbf{Input:}}
\algrenewcommand\algorithmicensure{\textbf{Output:}}
\begin{document}

\maketitle

% REQUIRED
\begin{abstract}
	%In this paper, we introduce a stochastic projected subgradient method for weakly convex (i.e., uniformly prox-regular) nonsmooth, nonconvex functions---a wide class of functions which includes the additive and convex composite classes. At a high-level, the method is an inexact proximal point iteration in which the strongly convex proximal subproblems are quickly solved with a specialized stochastic projected subgradient method. The primary contribution of this paper is a simple proof that the proposed algorithm converges at the same rate as the stochastic gradient method for smooth nonconvex problems. This result validates the use of stochastic subgradient methods in nonsmooth, nonconvex optimization as is common when optimizing neural networks. 
In this paper, we introduce a stochastic projected subgradient method for weakly convex (i.e., uniformly prox-regular) nonsmooth, nonconvex functions---a wide class of functions which includes the additive and convex composite classes. At a high-level, the method is an inexact proximal point iteration in which the strongly convex proximal subproblems are quickly solved with a specialized stochastic projected subgradient method. The primary contribution of this paper is a simple proof that the proposed algorithm converges at the same rate as the stochastic gradient method for smooth nonconvex problems. This result appears to be the first convergence rate analysis of a stochastic (or even deterministic) subgradient method for the class of weakly convex functions. In addition, a two-phase variant is proposed that significantly reduces the variance of the solutions returned by the algorithm. Finally, preliminary numerical experiments are also provided. 
\end{abstract}

% REQUIRED
\begin{keywords}
  Nonsmooth, Nonconvex, Subgradient, Stochastic, Proximal
\end{keywords}

% REQUIRED
\begin{AMS}
  65K05,65K10,90C26,90C15,90C30
\end{AMS}

\section{Introduction}\label{sec:Introduction}
%% What are we trying to do?
Stochastic approximation methods iteratively minimize the expectation of a family of  known loss functions with respect to an unknown probability distribution. Such methods are of fundamental importance in machine learning, signal processing, statistics, and data science more broadly. For example, in machine learning, one is often interested in designing a classifier that performs well on the entire population of samples, given only a finite list of correctly labeled pairs $z_1, \ldots, z_n$ obtained from a fixed, but otherwise unknown distribution $\PP$. Mathematically, such problems may be formulated as \emph{population risk minimization}: 
\begin{align}\label{eq:mainprob}
\minimize~ F(x) := \begin{cases} \EE_{z \sim P}\left[ f(x, z)\right] & \text{if $x \in \cX$;}\\
\infty & \text{otherwise.}
\end{cases}
\end{align}
where $(\Omega, \cF, \PP)$ is a probability space, the set $\cX\subseteq \RR^d$ is closed and convex, and $f : \RR^d \times \Omega \rightarrow \RR$ is a known loss function, which encodes the loss of decision rule $x \in \RR^d$ on sample $z \in \Omega$. 

Much algorithmic development has been inspired by~\eqref{eq:mainprob}.  Robbins-Monro's pioneering work 1951 work~\cite{MR0042668} developed the first method for solving~\eqref{eq:mainprob} when each $f(\cdot, z)$ is smooth and strongly convex and $\cX = \RR^d$. This and most later methods are variants of the stochastic projected (sub)gradient method, which iteratively constructs approximate solutions $x_t$ of~\eqref{eq:mainprob} through the recursion
\begin{align*}
&\text{Sample} ~ z_t \sim \PP\\
&\text{Set} ~x_{t+1} = \proj_{\cX}(x_t - \alpha_t \nabla_x f(x_t, z_t)),
\end{align*}
where $z_1, \ldots, z_t, \ldots $ are IID and $\alpha_t$ is an appropriate control sequence. For nonsmooth $f(\cdot, z_t)$, sample subgradients are simply replaced by sample subgradients $v_t \in \partial f(x_t, z_t)$, where $\partial f(x_t, z_t)$ denotes the subdifferential in the sense of convex analysis~\cite{rockafellar2015convex}. 

The complexity of minimizing~\eqref{eq:mainprob} is directly related to the regularity of $f(\cdot, z)$. For example, for convex functions $f(\cdot, z)$ the stochastic subgradient method attains expected functional accuracy $\varepsilon$ with after $O(\varepsilon^{-2})$ stochastic subgradient evaluations. For strongly convex losses, the number of stochastic subgradient evaluations drops to $O(\varepsilon^{-1}).$ The interested reader may turn to the seminal work~\cite{complexity} for an in-depth investigation of these methods and for information-theoretic lower bounds showing such rates are unimprovable without further assumptions. 

For convex functions, complexity theory does not favor smooth losses over nonsmooth losses. For nonconvex problems, the situation is less clear. In the smooth case, the seminal work of Ghadimi, Lan, and Zhang~\cite{Ghadimi2016mini} develops a variant of the stochastic projected gradient method and establishes that the expected norm of the projected gradient
\begin{align}\label{eq:projected_gradient}
\EE_{z_1, \ldots, z_t}\left[\| x_t - \proj_{\cX}(x_t -  \nabla_x\EE_{z \sim P}\left[ f(x_t, z)\right])\|^2\right],
\end{align}
a natural measure of stationarity, tends to zero at a controlled rate. Namely, with $O(\varepsilon^{-2})$ stochastic gradient evaluations, the algorithm produces a point with expected projected gradient norm squared less than $\varepsilon$.   

At the time of writing the original version of this manuscript, there was no similar rate of convergence in the nonsmooth nonconvex setting for any known subgradient-based algorithm. Part of the difficulty in establishing a complexity theory for nonsmooth nonconvex subgradient-based methods is that the ``usual criteria," namely the  objective error and the norm of the gradient, can be completely meaningless. Indeed, on the one hand, one cannot expect the objective error $F(x_t) - \inf F$ to tend to zero---even in the smooth setting. On the other hand, simple examples, e.g., $F(x) = |x|$, show that 
$
\dist(0, \partial F(x_t))
$
can be strictly bounded below by a fixed constant for all $t$. 

In contrast to subgradient-based methods, the ``usual criteria" is meaningful for the \emph{proximal point method}~\cite{PPA}, which constructs a sequence $x_t$ of approximate minimizers through the recursion 
\begin{align*}
x_{t+1} &= \argmin_{x \in \RR^d} \left\{ F(x) + \frac{1}{2\gamma}\|x - x_t\|^2\right\},
\end{align*}
where $\gamma$ is a control parameter. Namely, it is a simple exercise to show that under minimal assumptions on $F$, the subdifferential distance $\dist(0, \partial F(x_{t}))$ tends to zero. Of course, each step of the proximal point method is difficult, if not impossible to execute without further assumptions on $F$. 

The search for an appropriate class of functions $F$ for which each proximal subproblem may be (approximately) executed naturally leads us to the deceptively simple, yet surprisingly broad class of $\rho$-\emph{weakly convex} functions. This is the class of functions that become convex after adding the quadratic $\frac{\rho}{2}\|\cdot\|^2$. For example, any $C^2$ function on a compact, convex set becomes convex after adding the quadratic $\tfrac{|\lambda_+|}{2}\|\cdot\|^2$, where $\lambda$ is the minimal eigenvalue of its Hessian across all points in the set.  In the nonsmooth setting, this class includes all \emph{convex composite losses}
$$
h(c(x))
$$
where $h$ is convex and $L$-Lipschitz and $c$ is $C^1$ with $\beta$-Lipschitz Jacobian; such functions are known to be $\beta L$-weakly convex~\cite[Lemma 4.2]{drusvyatskiy2016accelerated}. The additive composite class is another widely used, much studied class of weakly convex functions, formed from all sums 
\begin{align*}
g(x) + r(x)
\end{align*}
where $r$ is closed and convex and $g$ is $C^1$ with $\beta$-Lipschitz gradient; such functions are known to be $\beta$-weakly convex. For further examples of weakly convex functions, see~\cite[Section 2.1]{davis2018stochastic}, which includes formulations of robust phase retrieval, covariance matrix estimation, blind deconvolution, sparse dictionary learning, robust principal component analysis, and conditional value at risk. We provide several further examples in Section~\ref{sec:examples_weakly}. It is important to note that none of these applications are covered by the seminal work of Ghadimi, Lan, and Zhang~\cite{Ghadimi2016mini}, which assumes an additive composite objective form. 

\textbf{Contributions.} In this paper, we develop the first known complexity guarantees for a subgradient-based method for a general class of nonsmooth nonconvex losses in stochastic optimization. The guarantees in this paper apply to $\rho$-weakly convex losses $F$. Our algorithm, called the Proximally Guided stochastic Subgradient Method (PGSG) (Algorithm~\ref{alg:PGSG}), follows an inner-outer loop strategy that may be compactly and informally summarized 
\begin{align}\label{eq:inner-outer}
x_{t+1} &= \text{$\varepsilon$-}\argmin_{x \in \RR^d} \left\{ F(x) + \rho\|x - x_t\|^2\right\} \qquad(\text{in expectation}).
\end{align}
 The outer loop of PGSG is governed by the approximate proximal point method applied to the population risk $F$. Due to $\rho$-weak convexity of $F$, the inner loop subproblem is a \emph{strongly convex} stochastic optimization problem. Thus, by classical complexity theory, approximate solutions to the inner loop subproblems may be quickly found. When both inner and outer loops are coupled together appropriately, we establish that this method produces a point $\bar x$ that is $\varepsilon$-close in expectation to the set of $\varepsilon$-critical points after $O(\varepsilon^{-2})$ stochastic subgradient evaluations, meaning, 
\begin{align}\label{eq:nearbynearlystationary}
\EE\left[\dist(\bar x, \{ x \mid \dist(0, \partial F(x))^2 \leq \varepsilon\} )^2\right] \leq \varepsilon,
\end{align}
where $\partial F$ is denotes the subdifferential of $F$ in the sense of variational analysis~\cite{rtrw}; see Section~\ref{sec:Inexact}. The nearly stationary point nearby $\bar x$ is it self a solution to a \emph{strongly convex} stochastic optimization problem so it is in principle obtainable to any desired degree of accuracy; see Remark~\ref{remark:finding_stat}. As stated before, simple examples show that one cannot expect the iterates produced by a subgradient-based algorithm themselves to be $\varepsilon$-stationary because $\dist(0, \partial F(x_t))$ may be bounded below for all $t$. Thus, in some sense this is the most natural convergence criteria available.

Having established expectation guarantees, we turn our attention to probabilistic guarantees. Namely, following~\cite{ghad} (which considers the smooth case), we say a (random) point $\bar x$ is an \emph{$(\varepsilon, \Lambda)$-solution} if 
$$
\PP\left(\dist(\bar x, \{ x \mid \dist(0, \partial F(x))^2 \leq \varepsilon\})^2 \leq \varepsilon  \right) \geq 1- \Lambda.
$$
Markov's inequality shows that PGSG finds an $(\varepsilon, \Lambda)$-solution $\bar x$ after 
\begin{align*}
O\left(\frac{1}{\Lambda^2\varepsilon^2}\right)
\end{align*}
stochastic subgradient evaluations. To improve this complexity, we introduce a 2-phase algorithm, called 2PGSG, which produces an $(\varepsilon, \Lambda)$-solution after 
\begin{align*}
O\left(\frac{\log(1/\Lambda)}{\varepsilon^2} +  \frac{\log(1/\Lambda)}{\Lambda\varepsilon}\right),
\end{align*}
stochastic subgradient evaluations, substantially reducing the variance of our solution estimate. The technique for achieving this improvement is somewhat different than what~\cite{ghad} proposes in the smooth case. The challenge in establishing the result is that we no longer have unbiased estimates of subgradients at nearly stationary points. Indeed, the iterates produced by subgradient methods are only nearby nearly stationary points and are not nearly stationary themselves.

Finally, we turn our attention to a more practical variant of PGSG, which does not assume that the weak convexity constant $\rho$ is known. In this setting, a simple idea---letting the outer loop stepsize tend to infinity---results in a point $\bar x$, which satisfies~\eqref{eq:nearbynearlystationary} after $O(\varepsilon^{2/(1-\beta)})$ stochastic subgradient evaluations, where $\beta \in (0, 1)$ is a user defined meta-parameter. We mention that the seminal work of  Ghadimi, Lan, and Zhang~\cite{Ghadimi2016mini} also assumes knowledge of the weak convexity constant $\rho$; in their setting $\rho$ is simply the Lipschitz constant of the gradient. 

We validate our results with some preliminary numerical experiments on the population objective of a robust real phase retrieval problem. We also discuss several more examples of Weakly convex functions in Section~\ref{sec:examples_weakly}.

\subsection{Related Work}\label{sec:related}
\paragraph{Stochastic Gradient Methods} The convergence rates presented in  match known rates for the stochastic gradient method in nonconvex optimization~\cite{ghad}. There, the standard stochastic gradient method may be used without modification. Interestingly, recent work has developed methods, which converge at the improved rate of $O(\varepsilon^{-3/2})$~\cite{fang2018spider}, showing a surprising gap between smooth and nonsmooth nonconvex optimization not present in the convex case. 

\paragraph{Stochastic Proximal-Gradient Methods}\label{sec:related_work} For additive composite problems
$$
\minimize \left\{\EE_z\left[f(x, z)\right] + r(x)\right\},
$$
one often employs stochastic proximal-gradient methods, which require, at every iteration, a (potentially costly) evaluation of the mapping $\prox{}{ r}{y} = \argmin \{ r(x) + \tfrac{1}{2}\|x - y\|^2\}$. These methods achieve expected projected gradient norm $\varepsilon$, as in~\eqref{eq:projected_gradient}, after $O(\varepsilon^{-2})$ stochastic gradient evaluations~\cite{Ghadimi2016mini}. These methods have also been extended to regularizers that are arbitrary closed prox-bounded functions $r$ \cite{wotao}, a setting which we do not recover. 

Evaluating the proximal mapping of $r$ could be substantially more expensive than computing a subgradient. For example, if $r = \|\cdot \|_2$ is the spectral norm on $\RR^{n \times n}$, then its proximal mapping requires a full singular value decomposition. In contrast, a subgradient may be computed from a single maximal eigenvector.%, which is a less expensive operation. 

Another advantage of stochastic subgradient methods over stochastic proximal-gradient methods, is that multiple nonsmooth functions may be present in the objective function $F$. The same is not true for stochastic proximal-gradient methods:  even if two functions $r_1$ and $r_2$ have simple proximal operators, the proximal operator of the sum $r= r_1 + r_2$ can be quite complex. Similarly, the proximal operator of an expectation $\EE_z\left[r(x, z)\right]$ could be intractable.

\paragraph{Stochastic Methods for Convex Composite} Recently~\cite{duchi2017stochastic} proposed a method for finding stationary points of the convex composite problem in which $f(x, z)= h(c(x, z), z)$. The first method adapts the prox-linear algorithm~\cite{Burke1987,Burke1995,drusvyatskiy2016nonsmooth,burke1985descent,drusvyatskiy2016error,Lewis2016,Fletcher1982} to the stochastic setting: given $x_t$, sample $z_t$ and form $x_{t+1}$ as the solution to the convex problem:
\begin{align}\label{eq:proxlinear}
x_{t+1} = \argmin_{x \in X} \left\{ h(c(x_t, z_t) + \nabla c(x_t, z_t)(x - x_t), z_t) +  \frac{1}{2\gamma_t} \|x - x_t\|^2\right\},
\end{align}
where $\gamma_t = \theta(1/\sqrt{t})$. The second proposed method is a straightforward application of the stochastic projected subgradient method~\cite{doi:10.1137/070704277}. Both methods are shown to almost surely converge to stationary points, but no rates of convergence are given. %In this paper, we do not provide a convergence rate for the prox-linear algorithm~\eqref{eq:proxlinear}, but we do provide a convergence rate for PGSG, which is a slight modification of the stochastic projected subgradient method analyzed in~\cite{duchi2017stochastic}. When each prox-linear subproblem is costly, stochastic subgradient methods are preferable to prox-linear methods because subgradients of $h( c(x, z),z) $ are easy to compute (see Section~\ref{sec:instantiations}).%$\partial_F (h^\xi \circ c^\xi)(x) := \nabla c^\xi(x)^T \partial h^\xi(c^\xi(x))$.

We remark that the convergence proof presented in~\cite{duchi2017stochastic} is complex, being based on the highly nontrivial theory of nonconvex differential inclusions. We believe there is a benefit to having a simple proof of convergence, albeit for a slightly different subgradient method, which is what we provide in this paper.

Further work on minimizing convex composite problems appears in~\cite{wang2017variance-reduced,wang2016prox-gradient,wang2017general}. This series of papers analyzes nested expectations: $F(x)=\EE_v[h(\EE_w[c(x, w)\mid v], v)]$. Although the stochastic structure considered in these papers is more general than what we consider in Problem~\ref{eq:mainprob}, the assumptions made on $F$ are much stronger than our assumptions on $F$. In particular, the authors prove  rates under the assumption that (a) $F$ is convex, (b) $F$ is strongly convex, or (c) $F$ is nonconvex, but \emph{differentiable} with Lipschitz continuous gradient. For case (c), the authors propose an algorithm that finds an $\varepsilon$-stationary point of $F$ after $O(\varepsilon^{-2.25})$ gradient evaluations \cite{wang2016prox-gradient} (in particular, they consider unconstrained problems). In contrast, we find a point that is $\varepsilon$-close to an $\varepsilon$-stationary point of the nonsmooth, nonconvex function $F$ after $O(\varepsilon^{-2})$ subgradient evaluations.

%Under the additional assumptions that $F$ has Lipschitz continuous gradient and $X=\RR^d$, they show that their accelerated stochastic gradient method finds an $\varepsilon$-stationary point after $O(\varepsilon^{-9/4})$ gradient evaluations \cite{wang2016prox-gradient}. Their convergence rate improves in the special case of $F$ being strongly convex. The authors also consider a variance reduced approach to this problem in the case of finite-sums, under the strong assumptions that $F$ has Lipschitz continuous gradient and is strongly convex, in~\cite{wang2017variance-reduced}.

\paragraph{Inexact Proximal Point Methods in Nonconvex Optimization} The idea of using the inexact proximal point method to guide a nonconvex optimization algorithm to stationary points is not new. For example, Hare and Sagastizabal~\cite{Hare2009,Hare2010} propose a method for computing inexact proximal points, which then enables the analysis of a nonconvex bundle method. The more recent work~\cite{paquette2017catalyst} exploits linearly convergent algorithms for solving the proximal subproblems. In contrast for the subproblems considered in this work, there are no linearly convergent stochastic subgradient algorithms capable of minimizing the proximal point step.

\paragraph{Subgradient Methods for Weakly Convex Problems} This paper is not the first to consider subgradient methods under weak convexity. For example, the early work~\cite{Nurminski1979} proves subsequential convergence of the (non projected) subgradient method for weakly convex \emph{deterministic} problems. However, no rates were given in that work. 

\paragraph{Almost Sure Convergence of Stochastic Subgradient Methods for Nonconvex Problems} Convergence to stationary points of stochastic subgradient methods in nonsmooth, nonconvex optimization has previously been attained under several different scenarios, some of which are more general than the scenario considered in Problem~\eqref{eq:mainprob}~\cite{10.2307/3689671,Ermol'ev1998,Ermoliev2003}. No rates of convergence were given in these works. In contrast, the novelty of the proposed approach lies in the attained rate of convergence, which matches the best known rates of convergence for smooth, nonconvex stochastic optimization~\cite{ghad}.

\paragraph{Rates of Convergence in Stochastic Weakly Convex Optimization} Since the first draft of this paper appeared on arXiv in July 2017, several works appearing in 2018 have established convergence of the standard stochastic projected subgradient method under weak convexity~\cite{davis2018stochastic,davis2018stochastic_sub}. The obtained rates (in expectation) are essentially the same as those obtained in this paper, namely they are of the form presented in equation~\eqref{eq:nearbynearlystationary}. The authors of~\cite{davis2018stochastic,davis2018stochastic_sub} do not provide any probabilistic guarantees.

\subsection{Outline}

Section~\ref{sec:not} presents notation and several basic results used in this paper, as well as further examples of weakly convex functions. Section~\ref{sec:Inexact} presents our convergence analysis under the assumption that $\rho$ is known. Section~\ref{sec:proabilistic} presents our probabilistic guarantees. Section~\ref{sec:weakconvexityunknown} presents our convergence analysis when $\rho$ is unknown.  Section~\ref{sec:experiments} preliminary presents numerical results obtained on a robust phase retrieval problem.

\section{Notation and Basic Results} \label{sec:not}
Most of the notation and concepts we use in this paper can be found in~\cite{rtrw,doi:10.1137/1.9781611971309}. Our main probabilistic assumption is that we work in a probability space $(\Omega, \cF, \PP)$ and $\RR^d$ is equipped with the Borel $\sigma$-algebra, which we use to define measurable mappings. 

For a given function $F: \RR^d \rightarrow \RR \cup \{\infty\}$, we let 
$$
\dom ~F = \{ x\in \RR^d  \mid F(x) < \infty \} \qquad \epi ~F = \{(x, t)\in \RR^d \times \RR \mid f(x) \leq t\}.
$$
We say a function is \emph{closed} if $\epi ~F$ is a closed set. We say a function is \emph{proper} if $\dom~F \neq \emptyset$.

Let $F \colon \RR^d \rightarrow \RR\cup \{\infty\} $ be a proper closed function. At any point $x \in \dom~ F$, we let 
\begin{align*}
\partial F(x) =  \{ v \in \RR^d \mid \left(\forall y \in \RR^d\right) \; F(y) \geq F(x) + \dotp{v, y-x} + o(\|y - x\|)\}
\end{align*}
denote the \textit{Fr{\'e}chet subdifferential} of $F$ at $x$. On the other hand, if $x \notin \dom~ F$ we let $\partial F(x) = \emptyset$. It is an easy exercise to show that at any local minimizer $x$ of $F$, we have the inclusion $0 \in \partial F(x)$. 

For the class of weakly convex functions, all elements of the subdifferential generate quadratic underestimators of the function $F$, as the following proposition shows. The equivalences are based on \cite[Theorem 3.1]{daniilidis2005filling}.
%The following calculation, which is based on \cite[Theorem 3.1]{daniilidis2005filling} and standard results in variational analysis, enables us to compute the subdifferentials of $\obj^\xi$.
\begin{proposition}[Subgradients of Weakly Convex Functions]\label{prop:generalweak}
Suppose that $F : \RR^d \rightarrow \RR \cup \{ \infty\}$ is a closed function. Then the following are equivalent
\begin{enumerate}
\item $F$ is $\rho$-weakly convex. That is, $F + \frac{\rho}{2} \|\cdot\|^2$ is convex. 
\item\label{prop:generalweak:item:perturbed} For any $x, y \in \RR^d$ with $v \in \partial F(x)$, we have
\begin{align}\label{eq:perturbedconvexity}
F(y) \geq F(x) + \dotp{v, y - x} - \frac{\rho}{2}\|y - x\|^2.
\end{align}
\item For all $x, y \in \RR^d$ and $\alpha \in [0, 1]$, we have 
\begin{align*}
F(\alpha x + (1-\alpha)y) \leq \alpha F( x) + (1-\alpha)f(y) + \frac{\rho\alpha(1-\alpha)}{2}\|x-y\|^2.
\end{align*}
\end{enumerate}
%Moreover, assuming $F$ is $\rho$-weakly convex, for any $x \in \dom~F$ and $y \in \RR^d$, we have the identity 
%\begin{align*}
%\partial \left(F(\cdot) + \frac{\rho}{2}\|\cdot -y \|^2\right)(x) = \partial F(x) + \rho (x-y).
%\end{align*}
\end{proposition}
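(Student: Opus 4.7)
The plan is to reduce all three conditions to familiar statements about the auxiliary function $g(x) := F(x) + \tfrac{\rho}{2}\|x\|^2$, so that (1) is by definition convexity of $g$, and conditions (2) and (3) become, respectively, the global subgradient inequality and Jensen's inequality for $g$. I would first establish (1)$\Leftrightarrow$(3) by an algebraic identity, and then handle (1)$\Leftrightarrow$(2) via the Fr\'echet subdifferential sum rule with the smooth summand $\tfrac{\rho}{2}\|\cdot\|^2$.

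For (1)$\Leftrightarrow$(3), the polarization identity
\[
\alpha\|x\|^2 + (1-\alpha)\|y\|^2 - \|\alpha x + (1-\alpha)y\|^2 = \alpha(1-\alpha)\|x-y\|^2
\]
shows that the weighted inequality in (3) is exactly $g(\alpha x + (1-\alpha)y) \leq \alpha g(x) + (1-\alpha) g(y)$. Since $g$ is proper and closed, holding this for all $x,y \in \RR^d$ and all $\alpha \in [0,1]$ is equivalent to convexity of $g$, which is the definition of (1).

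For (1)$\Rightarrow$(2), I would invoke the Fr\'echet sum rule with the $C^1$ summand $\tfrac{\rho}{2}\|\cdot\|^2$ to write $\partial g(x) = \partial F(x) + \rho x$. Since $g$ is convex and lsc, its Fr\'echet subdifferential coincides with the classical convex subdifferential, so the global inequality $g(y) \geq g(x) + \dotp{w, y-x}$ holds for every $w \in \partial g(x)$. Substituting $w = v + \rho x$ with $v \in \partial F(x)$ and simplifying via $\tfrac{\rho}{2}(\|y\|^2 - \|x\|^2) = \rho\dotp{x, y-x} + \tfrac{\rho}{2}\|y-x\|^2$ produces exactly the perturbed inequality~(2).

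The main obstacle is (2)$\Rightarrow$(1). The same sum rule recasts (2) as the statement that $g$ satisfies the global convex subgradient inequality at every point where $\partial g$ is nonempty. To conclude convexity of $g$ itself, I would invoke the standard characterization of convex proper lsc functions by their Fr\'echet subdifferential; this is essentially the content of the cited Theorem~3.1 of Daniilidis--Georgiev and is also available in Rockafellar--Wets. The key ingredient is that, for a proper lsc function, the set of points where $\partial g$ is nonempty is dense in $\dom g$, so having the global subgradient inequality at all these points, combined with lower semicontinuity, is enough to recover Jensen's inequality for all convex combinations of arbitrary $x, y$. If a self-contained argument were preferred, I would instead prove (2)$\Rightarrow$(3) directly by a one-dimensional mean-value argument for $g$ along the segment $[x,y]$, applied first at the dense subset of subdifferentiable points and then extended to all of $[0,1]$ by lower semicontinuity of $g$.
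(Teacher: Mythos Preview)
The paper does not actually supply a proof of this proposition: it merely states that ``the equivalences are based on \cite[Theorem 3.1]{daniilidis2005filling}'' and moves on. Your proposal is correct and fills in exactly what the paper omits, and for the one nontrivial implication (2)$\Rightarrow$(1) you invoke the very same Daniilidis--Georgiev result the paper cites, so your approach is fully consistent with the paper's intent.
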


\subsection{Examples of Weakly Convex Functions} \label{sec:examples_weakly}

As stated in the introduction, the class of weakly convex functions is broad.  In the nonsmooth setting, this class includes all \emph{convex composite losses}
$$
h(c(x))
$$
where $h$ is convex and $L$-Lipschitz and $c$ is $C^1$ with $\beta$-Lipschitz Jacobian; such functions are known to be $\beta L$-weakly convex~\cite[Lemma 4.2]{drusvyatskiy2016accelerated}. Several popular weakly convex formulations are presented in~\cite[Section 2.1]{davis2018stochastic}. We now discuss several further examples. 

\begin{example}[Censored Block Model]
The censored block model~\cite{abbe2014censored} is a variant of the standard stochastic block model~\cite{abbe2016exact}, which seeks to detect two communities in a partially observed graph. Mathematically, we encode such communities by forming the ``community matrix" $M = \bar \theta \bar \theta^T \in \{-1, 1\}^d$, where $\bar x \in \{-1, 1\}^d$ is a membership vector in which $\bar x_i = 1$ if node $i$ is in the first community, and $\bar x_i = -1 $ otherwise. In the censored block model, we observe a randomly corrupted version $\hat M$ of the matrix $M$
$$
\hat M = \begin{cases} 
0 & \text{ with probability $1-p$;}\\
M_{ij} & \text{with probability $p(1-\epsilon)$;}\\
-M_{ij} & \text{with probability $p\epsilon$.}\\
\end{cases}
$$
Then our task is to recover $M$ given only $\hat M$. We may formulate this problem in the following form convex, composite form:
\begin{align*}
F(x) = \sum_{ij \mid \hat M_{ij} \neq 0} | x_i x_j - \hat M_{ij}|.
\end{align*}
Notice that absolute value function encourages the matrix to $xx^T$ agree with $\hat M$ in most of its nonzero entries---the bulk of which are equal to $M_{ij}$---due to the sparsity promoting behavior of the nonsmooth absolute value function. 
\end{example}

\begin{example}[Robust Phase Retrieval]
Phase retrieval is a common task in computational science; applications include imaging, X-ray crystallography, and speech processing. Given a set of tuples $\{(a_i,b_i)\}_{i=1}^m\subset\RR^d\times \RR$, the (real) phase retrieval problem seeks a vector $x\in \RR^d$ satisfying $(a_i^Tx)^2=b_i$ for each index $i=1,\ldots,m$. This problem is NP-hard \cite{phae_NPhard}. Strictly speaking, phase retrieval is a feasibility problem. However, when the set of measurements $\{b_i\}$ is corrupted by gross outliers, one considers the following ``robust" phase retrieval objective: 
\begin{align*}
F(x) = \frac{1}{n}\sum_{i=1}^n |\dotp{a_i, x}^2 - b_i|.
\end{align*}
Notice that this nonsmooth objective is given in convex composite form, and therefore, it is weakly convex. 
\end{example}

\begin{example}[Nonsmooth Trimmed Estimation]
Let $f_1, \ldots, f_n$ be Lipschitz continuous, convex loss functions on $\RR^d$. The goal of trimmed estimation~\cite{rousseeuw1985multivariate,aravkin2016smart,menickelly2018robust} is to fit a model while simultaneously detecting and removing ``outlier" objectives $f_i$. Mathematically, we fix a number $h\in  \{1, \ldots, n\}$ indicating the number of ``inliers," and formulate the problem as follows: 
\begin{align*}
\minimize_{x \in \RR^d, w \in \RR^n} &~\sum_{i=1}^n w_if_i(x) \\
\text{subject to:} &~ w_i \in [0, 1] \text{ and }  \sum_{i=1}^n w_i = h.
\end{align*}
One can see that for fixed $x$, the only objective values that contribute to the sum are those that are among the $h$-minimal elements of the set $\{f_1(x), \ldots, f_n(x)\}$. In the Appendix, we provide a short proof that this objective is weakly convex. Notice that it is in general nonconvex, despite each $f_i$  begin convex. 
\end{example}

\section{Proximally Guided Stochastic Subgradient Method} \label{sec:Inexact}In this section, we formalize the proposed algorithm. First we slightly generalize the problem considered in the introduction, namely we assume that 
\begin{align}
\minimize_{x \in \RR^d} ~F(x) = \begin{cases}
f(x) & \text{if $x \in \cX$} \\
\infty & \text{otherwise.}
\end{cases}
\end{align}
where $f$ is a closed $\rho$-weakly convex function. Weak convexity of $f$ implies that each of the proximal subproblems $\min_{x \in \RR^d}\left\{ F(x) + (1/2\gamma)\|x- x_t\|^2\right\}$  is 
\begin{align*}
\mu:=\gamma^{-1} - \rho
\end{align*}
strongly convex. Next we introduce a stochastic subgradient oracle and a basic assumption on $F$. 
\begin{assumption}\label{assumption:main}
Fix a probability space $(\Omega, \cF, \PP)$ and equip $\RR^d$ with the Borel $\sigma$-algebra. Then we assume that
\begin{description}
	\item[(A1)] It is possible to generate IID\ realizations $z_1,z_2, \ldots$ from $P$.
	\item[(A2)] There is an open set $U\subseteq \RR^d$ containing $\cX$ and a measurable mapping $G : U \times \Omega \rightarrow \RR^d$ such that  $\EE_{z}[G(x,z)]\in \partial f(x)$.
	\item[(A3)] There is a constant $L \geq 0$ such that for all $x \in U$, we have $\EE_z\left[ \|G(x, z)\|^2\right] \leq L^2.$ 
	%\item[(A4)] The function $\phi$ is $\rho$-weakly convex on $X$ and $\phi_X^\ast := \inf_X \phi(x) > -\infty$.
\end{description}
\end{assumption}

Assumption~\ref{assumption:main} is standard in the literature on stochastic subgradient methods. In particular, assumptions (A1) and (A2) are identical to assumptions (A1) and (A2) in~\cite{doi:10.1137/070704277}, while assumption (A3) is identical to~\cite[Equation (2.5)]{doi:10.1137/070704277}. A useful consequence of (A3) is that $f$ itself is Lipschitz. 
\begin{lemma}[Lipschitz Continuity of $f${\cite[Section 3.2]{davis2018stochastic}}]\label{lem:Lipschitz}
Suppose that assumption (A3) holds. Then $f$ is $L$-Lipschitz continuous on U.
\end{lemma}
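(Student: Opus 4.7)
The plan is to first extract from the stochastic oracle a single bounded element of the subdifferential at each point, then use weak convexity of $f$ to turn this pointwise subgradient bound into a global Lipschitz estimate.

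First I would apply Jensen's inequality twice: by the triangle inequality for expectations, $\|\mathbb{E}_z[G(x,z)]\|\le \mathbb{E}_z\|G(x,z)\|$, and then $\mathbb{E}_z\|G(x,z)\|\le \sqrt{\mathbb{E}_z\|G(x,z)\|^2}\le L$ by (A3). Combined with (A2), this produces a measurable selection $v(x):=\mathbb{E}_z[G(x,z)]\in\partial f(x)$ with $\|v(x)\|\le L$ for every $x\in U$. The content of the lemma is that the existence of just one such bounded subgradient at every point, together with weak convexity, is enough.

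Next I would plug this selection into the quadratic underestimator guaranteed by weak convexity (item \ref{prop:generalweak:item:perturbed} of Proposition~\ref{prop:generalweak}): for any $x,y\in U$,
\[
f(y)-f(x)\ \ge\ \langle v(x),y-x\rangle-\frac{\rho}{2}\|y-x\|^2\ \ge\ -L\|y-x\|-\frac{\rho}{2}\|y-x\|^2.
\]
Swapping $x$ and $y$ yields the symmetric estimate $|f(x)-f(y)|\le L\|x-y\|+\frac{\rho}{2}\|x-y\|^2$, which is a Lipschitz bound spoiled only by a quadratic remainder.

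Finally, I would eliminate the quadratic remainder by a telescoping subdivision argument. For a segment contained in $U$, partition $[x,y]$ into $N$ equal pieces $z_0=x,z_1,\ldots,z_N=y$, apply the previous estimate to each consecutive pair, and sum:
\[
|f(y)-f(x)|\ \le\ \sum_{i=1}^{N}|f(z_i)-f(z_{i-1})|\ \le\ L\|y-x\|+\frac{\rho}{2N}\|y-x\|^2,
\]
and then let $N\to\infty$. The main obstacle is making the subdivision legitimate when $U$ is merely open rather than convex, since we need the intermediate $z_i$'s to lie in $U$ so that the bounded-subgradient selection is available. I would resolve this by first proving the estimate locally on any convex neighborhood contained in $U$, noting that $L$-Lipschitzness along every such segment together with the (closed, hence continuous on $U$) nature of $f$ immediately globalizes to $L$-Lipschitzness on any convex subset of $U$ containing $\mathcal{X}$, which is all that is actually used downstream.
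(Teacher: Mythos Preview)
The paper does not supply its own proof of this lemma; it simply defers to \cite[Section 3.2]{davis2018stochastic}. Your argument is correct and is the standard one: extract a bounded subgradient at each point via Jensen together with (A2)--(A3), invoke the weak-convexity inequality~\eqref{eq:perturbedconvexity} in both directions to obtain $|f(x)-f(y)|\le L\|x-y\|+\tfrac{\rho}{2}\|x-y\|^2$, and then eliminate the quadratic term by subdividing the segment and letting $N\to\infty$. Your handling of the non-convexity of $U$ is adequate for the paper's purposes, since the only place the lemma is invoked (Lemma~\ref{lem:prox_point_bound}) concerns a pair of points in the convex set $\cX\subseteq U$. One minor correction to your parenthetical: closedness of $f$ alone does not yield continuity on $U$; what you actually use is that $f+\tfrac{\rho}{2}\|\cdot\|^2$ is convex and finite on the open set $U$, hence continuous there, so $f$ is as well.
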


The main workhorse of PGSG is a stochastic subgradient method for solving  regularized subproblems $\min_{x \in \RR^d}\left\{ F(x) + (1/2\gamma)\|x- x_t\|^2\right\}$ induced by the proximal point method. We now state this method. 

\begin{algorithm}[H]
	\caption{Projected Stochastic Subgradient Method for Proximal Point Subproblems $\mathrm{PSSM}(y_0, G, \gamma, \{\alpha_t\}, J)$ \label{alg:SSM}}
	\begin{algorithmic}[1]
		\Require{$y_0 \in \cX,$ quadratic multiplier $\gamma> 0$, maximum iterations $J \in \NN$, nonnegative stepsize sequence $\{\alpha_t\}$.}
		\For{$j  = 0, \ldots, J-2$}
		\State Sample $z_{j}$ and set $v_{j} = G(y_{j},z_{j}) + \frac{1}{\gamma}(y_{j}-y_0)$%\in \partial (\obj^{\xi_{t,j}}(\cdot) + \frac{1}{2\gamma}\|\cdot - x_t\|^2)(y_{t,j})$
		%\Let{$\alpha_j$}{$\frac{2}{(\gamma^{-1}-\rho)(j+1) + 12/(\gamma-\gamma^2\rho)}$}
		\Let{$y_{j+1}$}{$\proj_\cX(y_{t,j} - \alpha_j v_{j})$}
		\EndFor
		\Ensure{ $\tilde y = \frac{2}{J(J+1)}\sum_{j=0}^{J-1}(j+1)y_{j}$.}
	\end{algorithmic}
\end{algorithm}

 Before introducing the Proximally Guided stochastic Subgradient (PGSG) method, we introduce two necessary algorithm parameters: 
\begin{align*}	
	j_t &\geq \frac{11}{\gamma^2\mu^2};\numberthis\label{eq:j_t_equation}\\
	\alpha_j &= \frac{2}{\mu\left(j+2 + \dfrac{36}{\gamma^4\mu^4(j+1)}\right)}. \numberthis\label{eq:alpha}
\end{align*}
The algorithm now follows.
\begin{algorithm}[H]
	\caption{Proximally Guided Stochastic Subgradient Method $ \mathrm{PGSG}(y_0, G, \gamma, \{\alpha_t\}, \{j_t\}, T)$ \label{alg:PGSG}}
	\begin{algorithmic}[1]
		\Require{$x_0 \in \cX$, weak convexity constant $\rho > 0$, $\gamma \in (0, 1/\rho)$, maximum iterations $T \in \NN$, stepsize sequence $\{\alpha_t\}$  \text{as in~\eqref{eq:alpha}},  maximum inner loop iteration $\{j_t\}$ \text{satisfying~\eqref{eq:j_t_equation}}.}
		\For{$t  = 0, \ldots, T-2$}
		\State{$x_{t+1} = \mathrm{PSSM}(x_t, G, \gamma, \{\alpha_t\}, j_t)$}
		\EndFor
		\Ensure{$x_R$, where $R$ is sampled uniformly from $\{0, \ldots, T-1\}$.}
	\end{algorithmic}
\end{algorithm}

As stated in the introduction PGSG employs an inner-outer loop strategy, which is shown in Algorithm~\ref{alg:PGSG}. The outer loop executes $T-1$ approximate proximal point steps, resulting in the iterates $\{x_t\}$. The inner loop, shown in Algorithm~\ref{alg:SSM}, approximately solves the proximal point subproblem, which is now strongly convex, using a stochastic subgradient method for strongly convex optimization~\cite{2012arXiv1212.2002L}. Beyond its use in governing the outer loop dynamics of PGSG, the proximal point subproblems also lead to a natural measure of stationarity. 

Indeed, for all $t \in \NN$, define the proximal point
\begin{align}\label{eq:definition_of_z}
\hat x_{t} := \argmin_{x\in \RR^d}\left\{F(x) + \frac{1}{2\gamma}\|x-x_t\|^2\right\}.
\end{align}
Note that $\hat x_t$ exists and is unique by the $\mu$-strong convexity of the proximal subproblem. We stress that this point, although in principle obtainable via convex optimization, is never computed. Instead it is only used to formulate convergence guarantees. To that end, the following Lemma shows that the gap $\gamma^{-1}\|x_t - \hat x_t\|$ is a natural measure of stationarity. %The proof of the Lemma is a simple exercise, so we omit it. 
\begin{lemma}[Convergence Criteria]\label{eq:dist_to_critical}
Let $F : \RR^d \rightarrow \RR \cup \{ \infty\}$ be a proper closed function. Let $x \in \RR^d$. If  
$$
\hat x \in \argmin_{y \in \RR^d} \left\{ F(y) + \frac{1}{2\gamma} \|y - x\|^2\right\},
$$
then we have the bound 
\begin{align}\label{eq:sub_dist}
\dist(x, \{ y\in \RR^d \mid \dist(0, \partial F(y))^2 \leq \gamma^{-2}\|x - \hat x\|^2\}) \leq \|x - \hat x\|^2.
\end{align}
\end{lemma}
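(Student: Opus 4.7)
The plan is to exhibit a specific point $\hat x$ that lies in the set on the left-hand side, and then bound the distance from $x$ to that point, rather than trying to reason about the set as a whole. In fact, I suspect the bound as stated has a small typo: the right-hand side should be $\|x - \hat x\|$ (not squared), which is exactly what the strategy below would yield. I will prove that version.

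The first step is to apply the first-order optimality condition to $\hat x$. Since $\hat x$ minimizes $F(y) + \frac{1}{2\gamma}\|y-x\|^2$, we have
\begin{equation*}
0 \in \partial\!\left(F(\cdot) + \tfrac{1}{2\gamma}\|\cdot - x\|^2\right)(\hat x) = \partial F(\hat x) + \gamma^{-1}(\hat x - x),
\end{equation*}
where the sum rule is justified because the quadratic term is smooth. Rearranging gives $\gamma^{-1}(x - \hat x) \in \partial F(\hat x)$. Here I would cite standard calculus rules for the Fr\'echet/limiting subdifferential from~\cite{rtrw}; since $F$ is $\rho$-weakly convex with $\gamma < 1/\rho$, the proximal subproblem is strongly convex and well-posed, so no subtleties arise.

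The second step is immediate: from $\gamma^{-1}(x - \hat x) \in \partial F(\hat x)$, one has
\begin{equation*}
\dist(0, \partial F(\hat x))^2 \leq \gamma^{-2}\|x - \hat x\|^2,
\end{equation*}
so $\hat x$ belongs to the set $\{ y \mid \dist(0, \partial F(y))^2 \leq \gamma^{-2}\|x - \hat x\|^2\}$. The distance from $x$ to this set is therefore at most $\|x - \hat x\|$, which finishes the proof.

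There is no real obstacle here — the lemma is essentially just an unpacking of the optimality condition defining the proximal point. The only thing to be careful about is using the correct calculus rule to write the subdifferential of the sum, and making sure the notion of $\partial F$ used in the conclusion matches the one for which this sum rule holds; both are standard under the assumptions already in play.
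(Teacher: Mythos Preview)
Your proposal is correct and follows essentially the same argument as the paper: apply the sum rule to get $\gamma^{-1}(x-\hat x)\in\partial F(\hat x)$, conclude $\hat x$ lies in the set, and bound the distance by $\|x-\hat x\|$. You are also right that the stated bound should be $\|x-\hat x\|$ rather than $\|x-\hat x\|^2$ (equivalently, the left-hand side should be squared); the paper's own proof only establishes the former, and the downstream uses of the lemma are consistent with that reading.
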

\begin{proof}
As $\hat x$ is a minimizer, we have $$0 \in \partial \left[F(\cdot) + \frac{1}{2\gamma} \|\cdot - x\|^2\right](y) = \partial F(y) + \frac{1}{\gamma}(y - x),$$ 
where the second equality follows by the sum rule for a smooth additive term $(2\gamma)^{-1} \|\cdot - x\|^2$~\cite{rtrw}. Thus, we have the inclusion $\hat x \in \{ y\in \RR^d \mid \dist(0, \partial F(y))^2 \leq \gamma^{-2}\|x - \hat x\|^2\}$, which leads to the desired conclusion.
\end{proof}

Based on this Lemma, the iterate $x_t$ is $\varepsilon$-close to an $\varepsilon$-stationary point in expectation whenever 
$$
\EE\|x_t - \hat x_t\|^2 \leq \min \{ \varepsilon, \gamma^2 \varepsilon\}.
$$
Establishing this fact is the main technical goal of the following theorem.

%~\\
%Our main convergence theorem follows. 

\begin{theorem}[Convergence of PGSG]\label{thm:Convergence-With-Log} Let $x_0 \in \cX$, consider any $T \in \NN$, and let \break$x_R = \mathrm{PGSG}(x_0, G, \gamma, \{\alpha_t\}, \{j_t\}, T)$ Define the quantity
\begin{align*}
\cB_{T, \{j_t\}} := \frac{4}{T\mu}\left(F(x_0) - \inf F + \sum_{t=0}^{T-1} \frac{72L^2}{\mu(j_t+1)} \right).
\end{align*}
Then $\EE\|x_R- \hat x_R\|^2 \leq \cB_{T, \{j_t\}}$. Consequently,  we have the following bound: 
\begin{align*}
\EE\left[\dist(x_R, \{ x \mid \dist(0, \partial F(x))^2 \leq \gamma^{-2}\cB_{T, \{j_t\}}\} )^2\right] \leq \cB_{T, \{j_t\}}
\end{align*}
	In particular, given $\Delta \geq F(x_0) - \inf F$, and setting 
	\begin{align*}
	j_t :=  \left\lceil\max\left(\frac{576 L^2}{\mu^2\min\{\varepsilon, \varepsilon \gamma^2\}}, \frac{11}{\gamma^2\mu^2} \right)\right\rceil && \text{and} && T := \left\lceil\frac{4\Delta}{\mu\min\{\varepsilon, \varepsilon\gamma^2\}}\right\rceil,
	\end{align*}
	we have 
	\begin{align*}
	\EE\left[\dist(x_R, \{ x \mid \dist(0, \partial F(x))^2 \leq \varepsilon )^2\right] \leq \varepsilon.
	\end{align*}
	The total number of stochastic oracle evaluations required to compute this point is bounded by $j_t\cdot T = O(\Delta L^2\varepsilon^{-2})$. 
\end{theorem}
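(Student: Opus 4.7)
The overall strategy is to treat Algorithm~\ref{alg:PGSG} as an inexact proximal point method in which PSSM approximately minimizes the $\mu$-strongly convex subproblem $\Phi_t(x) := F(x) + \frac{1}{2\gamma}\|x-x_t\|^2$. The first main step is an inner-loop guarantee of the form $\EE[\Phi_t(x_{t+1}) - \Phi_t(\hat x_t) \mid x_t] \leq C L^2/(\mu(j_t+1))$ for an explicit constant $C$. PSSM is a weighted-averaged stochastic subgradient method applied to $\Phi_t$ with stochastic subgradients $v_j = G(y_j,z_j) + \gamma^{-1}(y_j - y_0)$ whose second moment $\EE\|v_j\|^2 \leq 2L^2 + 2\gamma^{-2}\|y_j-y_0\|^2$ is not a priori uniformly bounded. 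I would control the radius $\|y_j - y_0\|$ by an induction along $j$ that exploits the $\mu$-strong convexity of $\Phi_t$, the bound $\|\hat x_t - x_t\|\leq \gamma L$ (which follows from $L$-Lipschitzness of $f$ via Lemma~\ref{lem:Lipschitz} and the optimality conditions for $\hat x_t$), and the specific stepsize $\alpha_j$ in \eqref{eq:alpha}; the extra correction $36/(\gamma^4\mu^4(j+1))$ inside $\alpha_j$ and the lower bound $j_t\geq 11/(\gamma^2\mu^2)$ are exactly what is needed to close the induction with the clean explicit constant appearing in $\cB_{T,\{j_t\}}$. Combined with the standard recursion for $(j+1)$-weighted averaged SGD on a strongly convex objective, this yields the stated $O(L^2/(\mu(j_t+1)))$ inner rate.

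The second step converts inner suboptimality into outer descent. By $\mu$-strong convexity of $\Phi_t$ applied at its minimizer $\hat x_t$ and $y=x_t$, one obtains $\Phi_t(\hat x_t) \leq F(x_t) - \tfrac{\mu}{2}\|x_t-\hat x_t\|^2$. Combining with the inner-loop bound and dropping the nonnegative term $\tfrac{1}{2\gamma}\|x_{t+1}-x_t\|^2$ on the left gives
\[
\EE[F(x_{t+1})\mid x_t] \;\leq\; F(x_t) - \tfrac{\mu}{2}\|x_t-\hat x_t\|^2 + \tfrac{CL^2}{\mu(j_t+1)}.
\]
Taking total expectation, summing from $t=0$ to $T-1$, and using $F(x_T)\geq \inf F$ produces $\tfrac{\mu}{2}\sum_t \EE\|x_t-\hat x_t\|^2 \leq F(x_0) - \inf F + \sum_t \tfrac{CL^2}{\mu(j_t+1)}$. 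Dividing by $T$ and exploiting the uniform sampling of $R$ yields $\EE\|x_R - \hat x_R\|^2 \leq \cB_{T,\{j_t\}}$. The distance-to-stationarity bound then follows by applying Lemma~\ref{eq:dist_to_critical} at $x=x_R$, $\hat x=\hat x_R$ and taking expectations on both sides of \eqref{eq:sub_dist}.

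For the final assertion, set $\varepsilon_\gamma := \min\{\varepsilon,\gamma^2\varepsilon\}$. The choice $T \geq 4\Delta/(\mu\varepsilon_\gamma)$ forces the contribution $4(F(x_0)-\inf F)/(T\mu)$ in $\cB_{T,\{j_t\}}$ to be at most $\varepsilon_\gamma$, while $j_t \geq 576L^2/(\mu^2\varepsilon_\gamma)$ forces the aggregated inner-error term to be at most $\varepsilon_\gamma$. Hence $\EE\|x_R - \hat x_R\|^2 \leq \varepsilon_\gamma$, which in particular delivers both $\EE\|x_R-\hat x_R\|^2 \leq \varepsilon$ and $\gamma^{-2}\EE\|x_R-\hat x_R\|^2 \leq \varepsilon$; together with the set inclusion $\{y:\dist(0,\partial F(y))^2\leq \gamma^{-2}\varepsilon_\gamma\}\subseteq\{y:\dist(0,\partial F(y))^2\leq \varepsilon\}$ and the distance bound above, this upgrades to $\EE[\dist(x_R,\{x:\dist(0,\partial F(x))^2\leq\varepsilon\})^2]\leq\varepsilon$. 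The total oracle cost $j_t T$ multiplies out to $O(\Delta L^2/\varepsilon^2)$. The principal obstacle is the inner-loop bound with explicit constants: the subgradients of $\Phi_t$ fail to enjoy an a priori uniform second-moment bound, and pairing the self-referential stepsize $\alpha_j$ with $(j+1)$-weighted averaging demands a careful induction. Once that single-loop rate is in hand, the remainder is the clean inexact-proximal-point telescoping sketched above.
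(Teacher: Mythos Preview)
Your overall architecture---an inner-loop rate for PSSM on the strongly convex subproblem, followed by inexact-proximal-point telescoping and an appeal to Lemma~\ref{eq:dist_to_critical}---matches the paper exactly, and the outer-loop portion of your sketch is essentially identical to the paper's proof.

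The one substantive organizational difference is in the inner loop. You propose to control $\|y_j - y_0\|$ by a separate induction along $j$ and to absorb all $\|x_t-\hat x_t\|$ dependence inside the inner-loop analysis, using both the Lipschitz bound and the floor $j_t\geq 11/(\gamma^2\mu^2)$ there. The paper instead bounds the stochastic subgradient second moment as
\[
\EE\|v_j\|^2 \;\leq\; 15L^2 + \frac{6}{\gamma^2\mu}\bigl(\Phi_t(y_j)-\Phi_t(\hat x_t)\bigr),
\]
by splitting $y_j-y_0=(y_j-\hat x_t)+(\hat x_t-y_0)$, applying Lemma~\ref{lem:prox_point_bound} to the second piece, and using strong convexity to trade $\|y_j-\hat x_t\|^2$ for the function gap. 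The function-gap term is then absorbed directly into the one-step recursion because the correction $36/(\gamma^4\mu^4(j+1))$ in $\alpha_j$ forces $6\alpha_j/(\gamma^2\mu)\leq 1$. This yields the two-term bound of Proposition~\ref{prop:Inner-Convergence-With-Log} with a residual $\|x_t-\hat x_t\|^2/(j_t(j_t+1))$ piece, and the floor $j_t\geq 11/(\gamma^2\mu^2)$ is deployed in the \emph{outer} loop to absorb that piece into the descent coefficient, reducing it from $\mu/2$ to $\mu/4$---which is exactly why $\cB_{T,\{j_t\}}$ carries the prefactor $4/(T\mu)$ rather than $2/(T\mu)$. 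The paper's route avoids any standalone induction and cleanly separates the roles of the two algorithmic knobs (the $\alpha_j$ correction versus the $j_t$ floor); your route should also work, but recovering the precise constant $72$ and the prefactor $4$ from it would require more bookkeeping than you indicate.
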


\begin{remark}[Obtaining a Nearly Stationary Point]\label{remark:finding_stat}As stated, the theorem indicates that $x_R$ is nearby a nearly stationary point. The proof of Lemma~\ref{eq:definition_of_z} shows that one can in principle obtain the nearly stationary  point $\hat x_R$ by solving the \emph{strongly convex} stochastic optimization problem 
$$
\hat x_R = \argmin_{x\in \RR^d}\left\{F(x) + \tfrac{1}{2\gamma}\|x-x_R\|^2\right\}, 
$$
which is solvable to any desired degree of accuracy (in expectation). Furthermore, Lemma~\ref{eq:definition_of_z} shows that one can estimate the degree of stationarity of $\hat x_R$ by the bound $\dist(0, \partial F(\hat x_R))^2 \leq \gamma^{-2}\|x_R - \hat x_R\|^2$. In particular, given an estimate, $\tilde x_{R} \approx \hat x_R$, we have the bound $\dist(0, \partial F(\hat x_R))^2 \leq 2\gamma^{-2}\|x_R - \tilde x_R\|^2 + 2\gamma^{-2} \|\tilde x_R - \hat x_R\|^2$, which indicates that $2\gamma^{-2}\|x_R - \tilde x_R\|^2$ may serve as a bound on the true stationarity of $\hat x_R$ (up to tolerance $2\gamma^{-2} \|\tilde x_R - \hat x_R\|^2$).
\end{remark}

\subsection{Proof of Theorem~\ref{thm:Convergence-With-Log}}
Throughout the proof we will need the following bound on the proximal point step: 
\begin{lemma}[Bounded Steplengths]\label{lem:prox_point_bound}
Let $\gamma > 0$, $x \in \cX$, and suppose that $$\hat x\in  \argmin_{y\in \RR^d}\left\{F(y) + \tfrac{1}{2\gamma}\|y-x\|^2\right\}.$$ Then $\gamma^{-1} \|x - \hat x\| \leq 2L$. 
\end{lemma}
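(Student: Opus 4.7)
The plan is to leverage the optimality of $\hat x$ in the proximal subproblem together with the Lipschitz continuity of $f$ on $U\supseteq \cX$ (Lemma~\ref{lem:Lipschitz}) to bound the squared distance $\|x-\hat x\|^2$ by a linear term in $\|x-\hat x\|$, from which dividing by $\|x-\hat x\|$ yields the claim.

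First I would note that since $x\in\cX$, we have $F(x) = f(x) < \infty$, so $\hat x$ is a finite minimizer and in particular $F(\hat x)<\infty$, which forces $\hat x\in\cX$. Next, comparing the value of the proximal objective at $\hat x$ against its value at $x$ gives
\begin{equation*}
F(\hat x) + \frac{1}{2\gamma}\|\hat x - x\|^2 \;\leq\; F(x),
\end{equation*}
which rearranges to $\tfrac{1}{2\gamma}\|\hat x - x\|^2 \leq F(x) - F(\hat x)$. Since both $x$ and $\hat x$ lie in $\cX\subseteq U$, Lemma~\ref{lem:Lipschitz} gives $F(x) - F(\hat x) = f(x) - f(\hat x) \leq L\|x-\hat x\|$. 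Combining these two inequalities yields $\tfrac{1}{2\gamma}\|x-\hat x\|^2 \leq L\|x - \hat x\|$, and (assuming $\|x-\hat x\|>0$; otherwise the claim is trivial) dividing through produces $\|x-\hat x\| \leq 2\gamma L$, which is exactly the desired bound $\gamma^{-1}\|x-\hat x\|\leq 2L$.

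There is no real obstacle here: the only subtlety is confirming $\hat x \in \cX$ so that the Lipschitz estimate of Lemma~\ref{lem:Lipschitz} applies, and this follows for free from finiteness of the proximal objective at the optimum. The argument is a standard "descent from trivial comparator" step, and it does not use weak convexity of $f$ at all.
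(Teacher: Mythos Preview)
Your proof is correct and follows essentially the same argument as the paper: compare the proximal objective at $\hat x$ with its value at $x$, bound $F(x)-F(\hat x)$ via the $L$-Lipschitz continuity from Lemma~\ref{lem:Lipschitz}, and divide through. You are in fact slightly more careful than the paper, as you explicitly verify $\hat x\in\cX$ so that the Lipschitz bound applies and treat the trivial case $\|x-\hat x\|=0$ separately.
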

\begin{proof}
Note that 
\begin{align*}
\frac{1}{2\gamma}\|x - \hat x \|^2 \leq F(x) - F(\hat x) \leq L\|x - \hat x \|,
\end{align*}
where Lipschitz continuity follows from Lemma~\ref{lem:Lipschitz}. Divide both sides of the inequality by $\tfrac{1}{2}\|x - \hat x\|$ to get the result. 
\end{proof}

We now analyze one inner loop of Algorithm~\ref{alg:PGSG}. This inner loop may be interpreted as a variant of the stochastic projected subgradient method applied to the strongly convex optimization problem, $$\minimize_{x \in \RR^d} \; F_y(x):= F(x) + \tfrac{1}{2\gamma}\|x - y\|^2,$$ 
We note that the following proof is similar in outline to~\cite{2012arXiv1212.2002L}, but the results of that work are not sufficient for our purposes.

%For convenience, we denote this regularized function in the $t$th outer iteration by $f_t(x):=\phi(x) + \frac{1}{2\gamma}\|x - x_t\|^2$. 
%Then the inner loop has the following guarantee, where $z_{t+1}$ is the unique minimizer of $f_t(x)$ over $X$.

\begin{proposition}[Analysis of PSSM]\label{prop:Inner-Convergence-With-Log}
	Let $y \in \cX$ and let $\hat y$ be the unique minimize of $F_y(x)$ over all $x \in \RR^d$. Set $\tilde y = \mathrm{PSSM}(y, G, \gamma, \{\alpha_t\}, J)$. Then if $\gamma \in (0, 1/\rho)$ and $\{\alpha_t\}$ is chosen as in~\eqref{eq:alpha}, we have%the unique minimizer of $f_t(x)$ over $X$
	\begin{align*}
	 \EE\left[F_y\left(\tilde y\right) - F_y(\hat y))\right]  &\leq  \frac{72L^2}{\mu(J+1)} +\frac{30\|y -\hat y\|^2}{\gamma^4\mu^3J(J+1)}; \\
	 \EE\left[\left\|\tilde y  - \hat y\right\|^2\right]  &\leq \frac{144L^2}{\mu^2(J+1)} +\frac{60\|y - \hat y \|^2}{\gamma^4\mu^4J(J+1)}; \\
	 \EE\left[\left\| y  - \tilde y\right\|^2\right]  &\leq  \frac{288L^2}{\mu^2(J+1)} +\left(2 + \frac{120}{\gamma^4\mu^4J(J+1)}\right)\|y - \hat y \|^2.
	\end{align*}
	On the other hand, if $0 < \alpha_{j}\leq 2\gamma $ for all $j$, but $\{\alpha_j\}$ and $\gamma$ are otherwise unconstrained, we have  $$\EE\left[\|y - \tilde y\|\right] \leq  L \sum_{i=0}^{J-1} \alpha_i.$$
\end{proposition}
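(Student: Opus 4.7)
My approach is to apply the standard one-step analysis for the stochastic projected subgradient method to the $\mu$-strongly convex objective $F_y(x) = F(x) + \frac{1}{2\gamma}\|x-y\|^2$, with one important adaptation: the stochastic subgradient $v_j = G(y_j,z_j) + \gamma^{-1}(y_j - y_0)$ is \emph{not} uniformly bounded in norm, because the quadratic part produces a drift term $\gamma^{-1}(y_j - y_0)$ that grows with $\|y_j - \hat y\|$ and $\|\hat y - y_0\|$. The whole point of the unusual stepsize in~\eqref{eq:alpha} is to absorb that drift into the strongly convex contraction.

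First, using nonexpansivity of $\proj_\cX$ and $\EE_j[v_j] \in \partial F_y(y_j)$ (where $\EE_j$ denotes conditional expectation given $y_0,\dots,y_j$), together with the $\mu$-strong convexity of $F_y$, I would derive the one-step recursion
\begin{equation*}
\EE_j\left[\|y_{j+1} - \hat y\|^2\right] \leq (1 - \alpha_j\mu)\|y_j - \hat y\|^2 - 2\alpha_j\bigl(F_y(y_j) - F_y(\hat y)\bigr) + \alpha_j^2\,\EE_j\left[\|v_j\|^2\right].
\end{equation*}
Second, I would bound $\|v_j\|^2 \leq 2\|G(y_j,z_j)\|^2 + 2\gamma^{-2}\|y_j - y_0\|^2$, then split $\|y_j - y_0\|^2 \leq 2\|y_j - \hat y\|^2 + 2\|\hat y - y_0\|^2$; after taking expectations and using (A3), I get
\begin{equation*}
\alpha_j^2\,\EE_j[\|v_j\|^2] \leq 2L^2\alpha_j^2 + 4\gamma^{-2}\alpha_j^2\,\|y_j - \hat y\|^2 + 4\gamma^{-2}\alpha_j^2\,\|\hat y - y_0\|^2,
\end{equation*}
so that in full expectation, with $a_j := \EE\|y_j - \hat y\|^2$ and $D := \|\hat y - y_0\|^2$,
\begin{equation*}
a_{j+1} \leq \bigl(1 - \alpha_j\mu + 4\gamma^{-2}\alpha_j^2\bigr)a_j - 2\alpha_j\,\EE[F_y(y_j) - F_y(\hat y)] + 2L^2\alpha_j^2 + 4\gamma^{-2}\alpha_j^2 D.
\end{equation*}

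The third step is where the concrete stepsize $\alpha_j = 2/\bigl(\mu(j+2 + 36\gamma^{-4}\mu^{-4}(j+1)^{-1})\bigr)$ matters. A direct computation shows that this choice yields $1 - \alpha_j\mu + 4\gamma^{-2}\alpha_j^2 \leq \frac{j}{j+2}$, which is the usual contraction factor needed in the Lacoste-Julien-Schmidt-Bach weighting scheme. I would then multiply the recursion by $(j+1)$, sum over $j=0,\dots,J-1$, and telescope the terms involving $a_j$; the resulting bound on $\sum_{j=0}^{J-1}(j+1)\EE[F_y(y_j) - F_y(\hat y)]$ combined with Jensen's inequality (applied to the convex $F_y$ and the convex combination $\tilde y$) gives the first inequality for $\EE[F_y(\tilde y) - F_y(\hat y)]$. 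Dropping the function-value telescope and retaining only the $a_J$ bound, together with Lemma~\ref{lem:prox_point_bound} to control $D \leq 4\gamma^2 L^2$ (which is used to simplify the $D$-dependent term into the presented form), gives a slightly cleaner aggregate; the two stated error constants $72L^2$ and $30/(\gamma^4\mu^3)$ come from carefully tracking these sums. The second inequality then follows from $\frac{\mu}{2}\|\tilde y - \hat y\|^2 \leq F_y(\tilde y) - F_y(\hat y)$ by strong convexity, and the third from $\|y - \tilde y\|^2 \leq 2\|y - \hat y\|^2 + 2\|\hat y - \tilde y\|^2$.

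For the final claim under the weaker hypothesis $0 < \alpha_j \leq 2\gamma$, I would use nonexpansivity of $\proj_\cX$ and $y_0 \in \cX$ to write
\begin{equation*}
\|y_{j+1} - y_0\| \leq \|(1 - \alpha_j\gamma^{-1})(y_j - y_0) - \alpha_j G(y_j,z_j)\| \leq \|y_j - y_0\| + \alpha_j\|G(y_j,z_j)\|,
\end{equation*}
where the last inequality uses $|1 - \alpha_j\gamma^{-1}| \leq 1$. Iterating and taking expectation with $\EE\|G(y_i,z_i)\| \leq L$ (by Jensen plus (A3)) yields $\EE\|y_j - y_0\| \leq L\sum_{i=0}^{j-1}\alpha_i$; averaging via $\tilde y - y_0 = \frac{2}{J(J+1)}\sum_{j}(j+1)(y_j - y_0)$ and swapping the order of summation gives the claimed bound.

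The main obstacle, as noted, will be verifying the arithmetic identity that makes the stepsize absorb the $4\gamma^{-2}\alpha_j^2$ perturbation into the $j/(j+2)$ contraction, and keeping track of the telescope sums with $(j+1)$ weights so that the final constants match the stated bound; everything else is a routine adaptation of strongly convex SGD analysis.
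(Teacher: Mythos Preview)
Your overall architecture is right, but the central arithmetic claim is false, and it is exactly the step where the paper's argument diverges from yours.

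You assert that the stepsize in~\eqref{eq:alpha} yields
\[
1 - \alpha_j\mu + 4\gamma^{-2}\alpha_j^2 \leq \frac{j}{j+2}.
\]
Check $j=0$: the right side is $0$, while the left side equals $1-\alpha_0\mu + 4\gamma^{-2}\alpha_0^2$, and since $\alpha_0\mu = 2/\bigl(2+36\gamma^{-4}\mu^{-4}\bigr) \leq 1$ and $4\gamma^{-2}\alpha_0^2>0$, the left side is strictly positive. So the inequality cannot hold. More generally, absorbing the drift term $4\gamma^{-2}\alpha_j^2\|y_j-\hat y\|^2$ directly into the distance contraction is incompatible with this stepsize: the extra additive piece $36\gamma^{-4}\mu^{-4}(j+1)^{-1}$ in the denominator makes $\alpha_j\mu$ strictly smaller than $2/(j+2)$, which already kills any hope of reaching the $j/(j+2)$ factor even before the positive perturbation is added.

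The paper handles the drift differently. After the same three-term split of $v_j$ (using $3$'s rather than your $2$'s and $4$'s, and invoking Lemma~\ref{lem:prox_point_bound} to fold $\gamma^{-2}\|\hat y - y\|^2$ into the $L^2$ constant), it applies strong convexity in the \emph{other} direction, bounding
\[
\frac{3}{\gamma^2}\|y_j-\hat y\|^2 \leq \frac{6}{\gamma^2\mu}\bigl(F_y(y_j)-F_y(\hat y)\bigr),
\]
so that the extra term attaches to the function-value coefficient rather than the distance coefficient. The recursion then reads
\[
\EE_j\|y_{j+1}-\hat y\|^2 \leq (1-\alpha_j\mu)\|y_j-\hat y\|^2 + 15\alpha_j^2 L^2 - \Bigl(2\alpha_j - \tfrac{6\alpha_j^2}{\gamma^2\mu}\Bigr)\bigl(F_y(y_j)-F_y(\hat y)\bigr),
\]
and the stepsize is engineered so that $6\alpha_j/(\gamma^2\mu)\leq 1$, leaving at least $-\alpha_j$ in front of the function gap. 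The telescoping is then driven by the exact identity $(j+1)\alpha_j^{-1} = (j+2)(\alpha_{j+1}^{-1}-\mu)$, which is the real reason for the $36\gamma^{-4}\mu^{-4}(j+1)^{-1}$ correction in~\eqref{eq:alpha}; the initial term $\alpha_0^{-1}-\mu = 18/(\gamma^4\mu^3)$ is what produces the $\|y-\hat y\|^2/(\gamma^4\mu^3 J(J+1))$ contribution in the final bound. Once you make this switch, the rest of your plan (Jensen for $\tilde y$, strong convexity for the second inequality, the triangle split for the third, and your argument for the last claim under $0<\alpha_j\leq 2\gamma$) goes through essentially as you wrote it.
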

 \begin{proof}
 Since $\hat y \in \cX$ and $\proj_\cX$ is nonexpansive, we have 
	\begin{align*}
	\|y_{j+1} - \hat y\|^2 & \leq \|y_{j} - \alpha_jv_{j} - \hat y\|^2\\
	& = \|y_{j} - \hat y\|^2 - 2\alpha_j\langle y_{j} -\hat y , v_{j}\rangle + \alpha_j^2\|v_{j}\|^2. \numberthis\label{eq:init_descent}
	\end{align*}
	To proceed further, we must bound $\|v_{t, j}\|^2$. To that end, recall that $F_y$ is $\mu$-strongly convex. Therefore, for any $x\in \cX$,
	\begin{align*}
		\EE_{z} \left\|G(x, z) + \frac{1}{\gamma}(x - y)\right\|^2&= \EE_{z} \left\|G(x, z) -\frac{1}{\gamma}(\hat y - y) +\frac{1}{\gamma}(x - \hat y)\right\|^2\\
		&\leq \EE_{z} 3\|G(x, z)\|^2 + 3\left\|\frac{1}{\gamma}(\hat y - y)\right\|^2 +3\left\|\frac{1}{\gamma}(x - \hat y)\right\|^2\\
		&\leq 15L^2 +3\left\|\frac{1}{\gamma}(x - \hat y)\right\|^2\\
		&\leq 15L^2 + \frac{6}{\gamma^2\mu}(F_y(x) - F_y(\hat y)),
	\end{align*}
	where the first inequality follows from Jensen's inequality, the second inequality uses (A3) twice and Lemma~\ref{lem:prox_point_bound}, and the third inequality follows from the strong convexity.
	
	Returning to Equation~\eqref{eq:init_descent}, we let $\bar v_{j} = \EE_{j} v_{j} \in \partial F_y(y_{j})$, where $\EE_j \left[\cdot \right]$ denotes the expectation conditioned on $y_1, \ldots, y_j$. Now, we take the conditional expectation of both sides of the equation, yielding 
	\begin{align*}
	\EE_{j}\|y_{j+1} - \hat y\|^2 & \leq \EE_{j}\|y_{j} - \hat y\|^2 - 2\alpha_j\langle y_{j} -\hat y , \bar v_{j}\rangle + \alpha_j^2\EE_{j}\|v_{j}\|^2\\
	& \leq \EE_{j}\|y_{j} - \hat y\|^2 + \alpha_j^2\left(15L^2 + \frac{6}{\gamma^2\mu}\EE_{j} F_y(y_{j}) -F_y(\hat y)\right) \\ & \indent - 2\alpha_j\left(\EE_{j} F_y(y_{j}) - F_y(\hat y) + \frac{\mu}{2}\EE_{j}\|y_{j} - \hat y\|^2\right)\\
	& = \left(1 - \alpha_j\mu\right)\EE_{j}\|y_{j} - \hat y\|^2 + 15\alpha_j^2L^2  - \left(2\alpha_j - \frac{6\alpha_j^2}{\gamma^2\mu}\right)\left(\EE_{j} F_y(y_{j}) - F_y(\hat y)\right)\\
	& \leq \left(1 - \alpha_j\mu\right)\EE_{t,j}\|y_{j} - \hat y\|^2 + 15\alpha_j^2L^2 - \alpha_j\left(\EE_{j} F_y(y_{j}) - F_y(\hat y)\right),
	\end{align*}
	where the second inequality uses our bound on $\EE_z\|G(x,z) + \gamma^{-1}(x-y)\|^2$ and the strong convexity of $F_y$, and the third inequality is a consequence of the bound: 
	$$
	\frac{6\alpha_j}{\gamma^2\mu} = \frac{2\mu(j+2)(6/\gamma^2\mu)}{(\mu(j+2))^2 + \frac{36}{\gamma^4\mu^2}\frac{j+2}{j+1}} \leq \frac{2\mu(j+2)(6/\gamma^2\mu)}{(\mu(j+2))^2 + (6/\gamma^2\mu)^2} \leq 1.
	$$
	Multiplying by $(j+1)/\alpha_j$, we find that 
	\begin{align*}
	(j+1)\alpha_j^{-1}\EE_{j}\|y_{j+1} - \hat y\|^2 & \leq (j+1)\left(\alpha_{j}^{-1} - \mu\right)\EE_{j}\|y_{j} - \hat y\|^2 + 15(j+1)\alpha_jL^2 \\ & \indent - (j+1)\left(\EE_{j} F_y(y_{j}) - F_y(\hat y)\right).
	\end{align*}
	By our choice of $\alpha_j$, we have $(j+1)\alpha_{j}^{-1} = (j+2)(\alpha_{j+1}^{-1}-\mu)$. Therefore, summing the previous inequality, we have
	\begin{align*}
	0 & \leq \left(\alpha_{0}^{-1} - \mu\right)\|y - \hat y\|^2 + 15L^2\sum_{j=0}^{J-1}(j+1)\alpha_j  - \sum_{j=0}^{J-1}(j+1)\left(\EE_{j} F_y(y_{j}) - F_y(\hat y)\right).
	\end{align*}
	Therefore, noting that $\sum_{j=0}^{j_t-1} (j+1)\alpha_j \leq 2j_t/\mu$ and $\alpha_0^{-1}-\mu = 18/(\gamma^4\mu^3)$, and using the convexity of $F_y$, we deduce
	\begin{align*}
	\EE\left(F_y(\tilde y) - F_y(\hat y)\right) & \leq \frac{36\|y - \hat y\|^2}{\gamma^4\mu^3J(J+1)} + \frac{60L^2}{\mu(J+1)}.
	\end{align*}
	The first distance bound then follows as a direct consequence of the strong convexity of $F_y$, while the second follows from the convexity of $\|\cdot\|^2$.
	
	Finally, we now work in the case in which $\gamma$ may be strictly greater than $1/\rho$. We claim that for all $j= 0, \ldots, J-1$, we have $\EE\left[ \|y_j - y_0\|\right] \leq L \sum_{i=0}^j \alpha_i$. Indeed, this is clearly true for $j = 0$. Inductively, we also have 
	\begin{align}
 		\EE_{j}\|y_{j+1} - x_t\| &\leq \EE_{j}\|y_{j} - \alpha_{j}\left(G(y_{j},z_{j}) + (y_{j}-x_t)/\gamma\right) - x_t\|\nonumber\\
 		&\leq |1 - \alpha_{j}/\gamma|\cdot \EE_{j}\|y_{j} - x_t\| + \alpha_{j}\EE_{\Xi_0}\|G(y_{j},z_{j})\|\nonumber\\
 		&\leq \EE_{j}\|y_{j} - x_t\| + \alpha_{j}L,\nonumber
 	\end{align}
	where the first inequality follows by nonexpansiveness of $\proj_{\cX}$ and the third follow from the inequality $0 < \alpha_{j}\leq 2\gamma$. Applying the law of expectation completes the inductive step. Therefore, we have 
	\begin{align*}
	\EE\left[\|y - \tilde y\|\right] \leq \EE\left[ \frac{2}{J(J+1)}\sum_{j=0}^{J-1}(j+1)\|y_j - y\|\right] &\leq  \frac{2}{J(J+1)}\sum_{j=0}^{J-1}(j+1)\left(L \sum_{i=0}^j \alpha_i\right) \\
	&\leq L \sum_{i=0}^{J-1} \alpha_i,
	\end{align*}
	as desired.

\end{proof}

We now give the proof of Theorem \ref{thm:Convergence-With-Log}.

 \begin{proof}[Proof of Theorem \ref{thm:Convergence-With-Log}]
  By the strong convexity of the proximal point subproblem, we have 
	\begin{align*}
	F(\hat x_t) \leq F(x_t) - \left(\frac{1}{2\gamma} + \frac{\mu}{2}\right)\|\hat x_t - x_t\|^2.
	\end{align*}
 	Then by Proposition~\ref{prop:Inner-Convergence-With-Log}, we have the following bound: 
	\begin{align*}
	\EE_{t}\left[F\left(x_{t+1}\right) \right]  &\leq F(\hat x_t) + \frac{1}{2\gamma} \|\hat x_t - x_t\|^2 +  \frac{72L^2}{\mu(j_t+1)} +\frac{30\|x_t -\hat x_t\|^2}{\gamma^4\mu^3j_t(j_t+1)} \\
	&\leq F(x_t) + \frac{72L^2}{\mu(j_t+1)} - \left(\frac{\mu}{2} - \frac{30}{\gamma^4\mu^3j_t(j_t+1)}\right)\|x_t - \hat x_t\|^2,
	\end{align*}
	where $\EE_t\left[\cdot\right]$ denotes the expectation conditioned on $x_1, \ldots, x_t$. Rearranging, using the lower bound on $j_t$ (which makes the multiple of $\|\hat x_t - x_t\|^2$ larger than $\mu/4$ as $30/121 < 1/4$), applying the law of total expectation, and summing, we find that 
	\begin{align*}
	\frac{1}{T}\sum_{t=0}^{T-1} \EE\left[\|x_t - \hat x_t\|^2 \right] &\leq \frac{4}{T\mu}\left(F(x_0) - \inf F + \sum_{t=0}^{T-1} \frac{72L^2}{\mu(j_t+1)} \right) ,
	\end{align*}
 as desired. To complete the proof, apply Lemma~\ref{eq:dist_to_critical}.
\end{proof}

\subsection{Probabilistic Guarantees}\label{sec:proabilistic}
In the previous section, we developed expected complexity results, which describe the average behavior of the PGSG over multiple runs. We are also interested in the behavior of a single run of the PGSG algorithm. Thus, in this section we recall the notion of an $(\varepsilon, \Lambda)$-solution given in the introduction: a random variable $\bar x$ is called an \emph{$(\varepsilon, \Lambda)$-solution} if 
$$
\PP\left(\dist(\bar x, \{ x \mid \dist(0, \partial F(x))^2 \leq \varepsilon\})^2 \leq \varepsilon  \right) \geq 1- \Lambda.
$$
Theorem~\ref{thm:Convergence-With-Log} together with Markov's inequality implies that $x_{R}$, generated with 
\begin{align*}
	j_t :=  \left\lceil\max\left(\frac{576 L^2}{\mu^2\min\{\varepsilon\Lambda, \varepsilon\Lambda \gamma^2\}}, \frac{12}{\gamma^2\mu^2} \right)\right\rceil && \text{and} && T := \left\lceil\frac{4\Delta}{\mu\min\{\varepsilon\Lambda, \varepsilon\Lambda\gamma^2\}}\right\rceil,
	\end{align*}
where $\Delta \geq F(x_0) - \inf F$, is an $(\varepsilon, \Lambda)$-solution after 
\begin{align}\label{eq:epsLambda1}
j_t \cdot T = O(\Delta L^2 (\varepsilon \Lambda)^{-2})
\end{align}
 stochastic oracle evaluations. In this section, we develop a two stage algorithm that significantly improves the dependence on $\Lambda$ in this bound. 

The method we propose proceeds in two phases. In the first phase, multiple independent copies of PGSG are called, resulting in candidates $x_{R^1}, \ldots, x_{R^S}$. For each of the candidates, we then compute an approximate proximal point $\tilde x_{R^s} \approx \hat x_{R^s}$. In the second phase, we select one of the candidates $x_{R^{\bar s}}$ based on the size of $\gamma^{-1} \|x_{R^s} - \tilde x_{R^s}\|$, a proxy for the true proximal step length. We will see that such a point is $(\varepsilon, \Lambda)$-solution, and the total number of stochastic oracle evaluations has a much better dependence on $\Lambda$. 

Before we introduce the algorithm, let us define three parameters 
	\begin{align}\label{eq:two_phase_parameters}
	j_t :=  \left\lceil\max\left\{\frac{576 L^2}{\mu^2\min\{\varepsilon/24, \varepsilon \gamma^2/24\}}, \frac{11}{\gamma^2\mu^2} \right\}\right\rceil, && \text{} && T := \left\lceil\frac{4\Delta}{\mu\min\{\varepsilon/24, \varepsilon\gamma^2/24\}}\right\rceil,
	\end{align}
	and 
	\begin{align*}
	J := \left\lceil\max \left\{ \frac{48L^2\sqrt{2}}{\mu\min\{\varepsilon, \varepsilon \gamma^2\}}\cdot \frac{S}{\Lambda},\frac{11}{\gamma^2 \mu^2} \cdot \sqrt{\frac{S}{\Lambda}}\right\} \right\rceil,
	\end{align*}
	where $\Delta \geq F(x_0) - \inf F$. The algorithm now follows. 

\begin{algorithm}[H]
	\caption{Two Phase Proximally Guided Stochastic Subgradient Method \label{alg:2-indices} $\mathrm{2PGSG}(x_0, G, \gamma, \{\alpha_t\}, \{j_t\}, T, J, S)$}
	\begin{algorithmic}[1]
		\Require{$x_0 \in \cX$, weak convexity constant $\rho > 0$, $\gamma \in (0, 1/\rho)$, maximum iterations $T \in \NN$ satisfying~\eqref{eq:two_phase_parameters}, stepsize sequence $\{\alpha_t\}$  \text{as in~\eqref{eq:alpha}},  maximum inner loop iteration $\{j_t\}$ \text{satisfying~\eqref{eq:two_phase_parameters}}, Stochastic Subgradient Iteration $J \in \NN$, number of copies $S \in \NN$.}
		\State \textbf{Optimization Phase}
		%\State \quad Let $R^1, \ldots, R^S$ be independent uniform random variables on $\{0, \ldots, T-1\}.$
		\For{$s  = 1, \ldots, S$}
		\State Set $x_{R^s}= \mathrm{PGSG}(x_0, G, \gamma, \{\alpha_t\}, \{j_t\}, T)$.
		\State Set $\tilde x_{R^s} = \mathrm{PSSM}(x_{R^s}, G, \gamma, \{\alpha_t\}, J)$.
		\EndFor
		\State \textbf{Post-Optimization Phase}
		\State Choose $ x^\ast = x_{R^{\bar s}}$ from the candidate list $\{ x^s_r\}_{s = 1}^S$ such that  
		\begin{align*}
		\bar s = \argmin_{s = 1, \ldots, S} ~\|x_{R^s} - \tilde x_{R^s}\| && && 
		\end{align*}
			\Ensure{ $x^\ast$}

	\end{algorithmic}
\end{algorithm}

The analysis of this algorithm requires a bound on the expectation of $\|x_{R^{s}} - \tilde x_{R^{s}}\|^2 $ and $\|\tilde x_{R^{s}} - \hat x_{R^{s}}\|^2$, which we now provide.
\begin{lemma}\label{lem:2Phase_bounds}
Let $x_{R^s}$ be generated as in Algorithm~\ref{alg:2-indices}. Then 
\begin{align*}
\EE\left[ \|x_{R^{s}} - \tilde x_{R^{s}}\|^2 \right] &\leq \frac{1}{4}\min\{\varepsilon,\gamma^2 \varepsilon\}; \\
\EE\left[ \|\tilde x_{R^{s}} - \hat x_{R^{s}}\|^2 \right] &\leq \frac{\Lambda}{4S}\min\{\varepsilon,\gamma^2 \varepsilon\}
\end{align*}
\end{lemma}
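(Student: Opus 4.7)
The plan is to combine the outer-loop guarantee from Theorem~\ref{thm:Convergence-With-Log} with the inner-loop distance bounds in Proposition~\ref{prop:Inner-Convergence-With-Log}, using the tower property to pass from conditional to total expectations.

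First I would observe that the parameters $T$ and $\{j_t\}$ in the Optimization Phase of Algorithm~\ref{alg:2-indices}, as prescribed by~\eqref{eq:two_phase_parameters}, match the setting in Theorem~\ref{thm:Convergence-With-Log} with target accuracy $\varepsilon/24$ in place of $\varepsilon$. Applying that theorem to each of the $S$ independent PGSG calls yields
\begin{align*}
\EE\|x_{R^s} - \hat x_{R^s}\|^2 \;\leq\; \cB_{T,\{j_t\}} \;\leq\; \tfrac{1}{24}\min\{\varepsilon,\gamma^2\varepsilon\}.
\end{align*}

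Next I would note that $\tilde x_{R^s} = \mathrm{PSSM}(x_{R^s}, G, \gamma, \{\alpha_t\}, J)$ is generated from samples independent of those used to produce $x_{R^s}$, so Proposition~\ref{prop:Inner-Convergence-With-Log} applies conditionally on $x_{R^s}$ with $y = x_{R^s}$, $\hat y = \hat x_{R^s}$, $\tilde y = \tilde x_{R^s}$. Taking total expectation via the tower property gives
\begin{align*}
\EE\|\tilde x_{R^s} - \hat x_{R^s}\|^2 &\leq \tfrac{144L^2}{\mu^2(J+1)} + \tfrac{60\,\EE\|x_{R^s} - \hat x_{R^s}\|^2}{\gamma^4\mu^4 J(J+1)},\\
\EE\|x_{R^s} - \tilde x_{R^s}\|^2 &\leq \tfrac{288L^2}{\mu^2(J+1)} + \Bigl(2 + \tfrac{120}{\gamma^4\mu^4 J(J+1)}\Bigr)\EE\|x_{R^s} - \hat x_{R^s}\|^2.
\end{align*}

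The last step is a calibration using the two lower bounds on $J$ from Algorithm~\ref{alg:2-indices}. The condition $J \geq 11\sqrt{S/\Lambda}/(\gamma^2\mu^2)$ forces $\gamma^4\mu^4 J(J+1) \geq 121\, S/\Lambda$, which shrinks the ``weak-convexity'' fractions $120/(\gamma^4\mu^4 J(J+1))$ and $60/(\gamma^4\mu^4 J(J+1))$ by a factor proportional to $\Lambda/S$. The other lower bound, scaling like $L^2 S/(\mu^2 \Lambda\min\{\varepsilon,\gamma^2\varepsilon\})$ (up to a constant), controls the leading $L^2/(\mu^2(J+1))$ terms by $\mathcal O((\Lambda/S)\min\{\varepsilon,\gamma^2\varepsilon\})$. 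Combined with Step~1, the second inequality then lands under $\Lambda\min\{\varepsilon,\gamma^2\varepsilon\}/(4S)$. For the first inequality, the coefficient $2 + 120/(\gamma^4\mu^4 J(J+1))$ collapses to at most $3$ once $J \geq 11/(\gamma^2\mu^2)$, so the $\|y-\hat y\|^2$ contribution is already bounded by $\min\{\varepsilon,\gamma^2\varepsilon\}/8$ using Step~1; the remaining $288L^2/(\mu^2(J+1))$ term is even smaller due to the $S/\Lambda$ factor in $J$, and the total comes in under $\min\{\varepsilon,\gamma^2\varepsilon\}/4$.

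The main obstacle will be the constant bookkeeping: the same $J$ must simultaneously certify the first bound, which only demands that $\EE\|x_{R^s} - \tilde x_{R^s}\|^2$ not inflate the already-tight $\EE\|x_{R^s} - \hat x_{R^s}\|^2$ by more than a constant factor, and the second bound, which needs the sharper $\Lambda/S$ reduction on both of its terms. The structural point that makes this work is that Proposition~\ref{prop:Inner-Convergence-With-Log}'s bound on $\|\tilde y - \hat y\|^2$ is linear in the starting error $\|y - \hat y\|^2$, so once PGSG drives $\EE\|x_{R^s} - \hat x_{R^s}\|^2$ down to the $\min\{\varepsilon,\gamma^2\varepsilon\}/24$ threshold, scaling $J$ by $S/\Lambda$ is enough to further sharpen the residual to the $\Lambda/(4S)$ precision required by the post-optimization selection rule.
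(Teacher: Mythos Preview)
Your proposal is correct and follows essentially the same route as the paper: invoke Theorem~\ref{thm:Convergence-With-Log} with the parameters~\eqref{eq:two_phase_parameters} to bound $\EE\|x_{R^s}-\hat x_{R^s}\|^2$ by $\cB_{T,\{j_t\}}$, feed that into the two distance inequalities of Proposition~\ref{prop:Inner-Convergence-With-Log} via the tower property, and then calibrate using the two lower bounds defining $J$. Your discussion of how the $S/\Lambda$ scaling in $J$ produces the sharper $\Lambda/(4S)$ factor in the second bound while the first bound only needs $J\geq 11/(\gamma^2\mu^2)$ to keep the coefficient $2+120/(\gamma^4\mu^4 J(J+1))\leq 3$ is exactly the mechanism the paper uses, though the paper states the final numerical inequalities more tersely.
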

\begin{proof}
By Proposition~\ref{prop:Inner-Convergence-With-Log} and Theorem~\ref{thm:Convergence-With-Log}, the bound holds: 
\begin{align*}
\EE\left[ \|x_{R^{s}} - \tilde x_{R^{s}}\|^2 \right] &\leq \frac{288L^2}{\mu^2(J+1)} +\left(2 + \frac{120}{\gamma^4\mu^4J(J+1)}\right)\EE\left[\|x_{R^s}  - \hat x_{R^s} \|^2 \right]\\
&\leq \frac{288L^2}{\mu^2(J+1)} +\left(2 + \frac{120}{\gamma^4\mu^4J(J+1)}\right)\cB_{T, \{j_t\}} \\
%&\leq \frac{288L^2}{\mu^2(J+1)} +\left(2 + \frac{120}{\gamma^4\mu^4J(J+1)}\right)\cB_{T, \{j_t\}} \\
&\leq \frac{\Lambda}{8S}\min\{\varepsilon,\gamma^2 \varepsilon\}/8 + \min\{\varepsilon,\gamma^2 \varepsilon\}/8 \leq \min\{\varepsilon,\gamma^2 \varepsilon\}/4, 
\end{align*}
which proves the first bound. 

On the other hand, Proposition~\ref{prop:Inner-Convergence-With-Log} and Theorem~\ref{thm:Convergence-With-Log} imply that
\begin{align*}
\EE\left[ \|\tilde x_{R^{s}} - \hat x_{R^{s}}\|^2 \right]  &\leq \frac{144L^2}{\mu^2(J+1)} +\frac{60}{\gamma^4\mu^4J(J+1)}\EE\left[\|x_{R^s} - \hat x_{R^s} \|^2\right]\\
&\leq \frac{144L^2}{\mu^2(J+1)} +\frac{60}{\gamma^4\mu^4J(J+1)}\cB_{T, \{j_t\}} \\
&\leq \frac{\Lambda}{8S}\min\{\varepsilon,\gamma^2 \varepsilon\} + \frac{\Lambda}{8S}\min\{\varepsilon,\gamma^2 \varepsilon\} = \frac{\Lambda}{4S}\min\{\varepsilon,\gamma^2 \varepsilon\},
\end{align*}
which proves the second bound and completes the proof. 
\end{proof}

We now state the convergence guarantees for Algorithm.

\begin{theorem}\label{thm:2phase}
Let $x_0 \in \cX$ and let $S = \log_2(2/\Lambda)$. Then  \break$x^\ast = \mathrm{2PGSG}(x_0, G, \gamma, \{\alpha_t\}, \{j_t\}, T, J, S)$ returned by Algorithm~\ref{alg:2-indices} is an $(\varepsilon, \Lambda)$-solution. The total number of stochastic oracle evaluations called by Algorithm~\ref{alg:2-indices} is equal to 
\begin{align}\label{eq:epsLambda2}
S \cdot (j_t \cdot T + J) = O\left(\frac{\log_2(1/\Lambda) \Delta L^2}{\varepsilon^2} + \frac{\log_2(1/\Lambda) L^2}{\varepsilon \Lambda}\right).
\end{align}
\end{theorem}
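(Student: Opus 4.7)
The plan is to engineer two concentration events---one for the minimum over the $S$ independent candidates, and one by a union bound across all $S$ candidates---whose joint occurrence forces the winning pair to satisfy $\|x_{R^{\bar s}} - \hat x_{R^{\bar s}}\|^2 \leq \min\{\varepsilon, \gamma^2 \varepsilon\}$. Lemma~\ref{eq:dist_to_critical} then converts this inequality into the claimed $(\varepsilon, \Lambda)$-solution property, and adding the cost $S(j_t \cdot T + J)$ of the two phases gives the stated complexity.

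First, I would invoke Lemma~\ref{lem:2Phase_bounds}, whose two inequalities are proved using precisely the parameter choices in~\eqref{eq:two_phase_parameters} together with the definition of $J$. The key asymmetry there is that $\EE[\|x_{R^s} - \tilde x_{R^s}\|^2]$ is only required to be a constant-order fraction of $\min\{\varepsilon, \gamma^2\varepsilon\}$, whereas $\EE[\|\tilde x_{R^s} - \hat x_{R^s}\|^2]$ is engineered to the much smaller order $(\Lambda/S)\min\{\varepsilon, \gamma^2\varepsilon\}$; the longer final PSSM refinement run of length $J$ at the end of each trajectory is precisely what secures this smaller expectation bound.

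Next, define the ``bad'' events
\begin{align*}
A = \{\min_{1 \leq s \leq S} \|x_{R^s} - \tilde x_{R^s}\|^2 > c_1\}, \qquad B = \{\max_{1 \leq s \leq S} \|\tilde x_{R^s} - \hat x_{R^s}\|^2 > c_2\},
\end{align*}
for appropriate constant multiples $c_1, c_2$ of $\min\{\varepsilon, \gamma^2\varepsilon\}$. Because the $S$ trajectories are independent, Markov's inequality together with the product rule gives $\PP(A) = \prod_{s} \PP(\|x_{R^s} - \tilde x_{R^s}\|^2 > c_1) \leq (1/2)^S$, which at $S = \log_2(2/\Lambda)$ is exactly $\Lambda/2$. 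For $B$, a union bound combined with Markov yields $\PP(B) \leq S \cdot \EE[\|\tilde x_{R^s} - \hat x_{R^s}\|^2]/c_2 \leq \Lambda/2$, the small per-term expectation precisely absorbing the $S$-fold union penalty. Hence $\PP(A \cup B) \leq \Lambda$.

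On $A^c \cap B^c$, the selected index $\bar s$ satisfies $\|x_{R^{\bar s}} - \tilde x_{R^{\bar s}}\|^2 \leq c_1$ (by its minimizing definition) and $\|\tilde x_{R^{\bar s}} - \hat x_{R^{\bar s}}\|^2 \leq c_2$, so a triangle inequality yields $\|x_{R^{\bar s}} - \hat x_{R^{\bar s}}\|^2 \leq 2(c_1 + c_2) \leq \min\{\varepsilon, \gamma^2\varepsilon\}$, and Lemma~\ref{eq:dist_to_critical} then closes the $(\varepsilon, \Lambda)$-solution argument. The complexity follows by inspection: each of the $S$ rounds costs $j_t \cdot T = O(\Delta L^2/\varepsilon^2)$ oracle calls (Theorem~\ref{thm:Convergence-With-Log}) plus $J = O(L^2/(\varepsilon \Lambda))$ more for the refinement PSSM, and multiplying by $S = \log_2(2/\Lambda)$ yields~\eqref{eq:epsLambda2}. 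The main obstacle is the data-dependence of $\bar s$, which blocks any direct Markov bound on $\|\tilde x_{R^{\bar s}} - \hat x_{R^{\bar s}}\|$; the workaround is the decoupling above, in which the observable quantity $\|x_{R^s} - \tilde x_{R^s}\|$ is handled via the exponential ``min of IID'' improvement and drives the selection, while the unobservable residual $\|\tilde x_{R^s} - \hat x_{R^s}\|$ is handled uniformly in $s$ by a union bound whose $S$-fold penalty was preemptively absorbed by making $J$ proportional to $S/\Lambda$.
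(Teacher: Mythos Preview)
Your proposal is correct and follows essentially the same route as the paper's proof: decompose $\|x_{R^{\bar s}} - \hat x_{R^{\bar s}}\|^2$ via $\tilde x_{R^{\bar s}}$, bound the first term by $\min_s$ and the second by $\max_s$, then apply Markov with independence to the minimum (yielding $2^{-S}$) and Markov with a union bound to the maximum (yielding $\Lambda/2$), with Lemma~\ref{lem:2Phase_bounds} supplying exactly the expectations needed. Your closing remark about the data-dependence of $\bar s$ as the obstacle that this decoupling circumvents is spot on and makes explicit the motivation the paper leaves implicit.
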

\begin{proof}
By Lemma~\ref{eq:dist_to_critical}, it suffices to show that 
\begin{align*}
\PP\left( \| x^\ast - \hat  x^\ast\|^2 \leq \min\{\varepsilon, \gamma^2\varepsilon \} \right) \geq 1- \Lambda.
\end{align*}
To that end, note that 
\begin{align*}
\|x^\ast - \hat  x^\ast\|^2 &= \|(x_{R^{\bar s}} - \hat x_{R^{\bar s} })\|^2\\
&\leq 2\|x_{R^{\bar s}} - \tilde x_{R^{\bar s} }\|^2 + 2\|\tilde x_{R^{\bar s} } - \hat x_{R^{\bar s} }\|^2\\
&\leq 2\min_{s = 1, \ldots, S}\|x_{R^{s}} - \tilde x_{R^{s} }\|^2 + 2\max_{s = 1, \ldots, S}\|\tilde x_{R^{ s} } - \hat x_{R^{ s}}\|^2.
\end{align*}
Therefore, we have 
\begin{align*}
&\PP\left(\|x^\ast - \hat  x^\ast\|^2 \geq\min\{\varepsilon, \gamma^2\varepsilon \} \right)\\
&\leq \PP\left\{\min_{s = 1, \ldots, S}\|x_{R^{s}} - \tilde x_{R^{s}}\|^2 \geq \frac{1}{2}\min\{\varepsilon, \gamma^2\varepsilon \}\right) \\
&\hspace{20pt}+ \PP\left(\max_{s = 1, \ldots, S}\|\tilde x_{R^{ s} } - \hat x_{R^{ s} }\|^2 \geq \frac{1}{2}\min\{\varepsilon, \gamma^2\varepsilon \}\right).
\end{align*}

Notice that by Markov's inequality, independence, and Proposition~\ref{lem:2Phase_bounds}, we have:
\begin{align*}
\PP\left(2\min_{s = 1, \ldots, S}\|x_{R^{s}} - \tilde x_{R^{s} }\|^2 \geq \frac{1}{2}\min\{\varepsilon, \gamma^2\varepsilon \}\right) \leq 2^{-S} \leq \frac{\Lambda}{2}.
\end{align*}
On the other hand, by Markov's inequality, a union bound, and Proposition~\ref{lem:2Phase_bounds}, we have
\begin{align*}
&\PP\left(2\max_{s = 1, \ldots, S}\|\tilde x_{R^{ s}} - \hat x_{R^{ s}}\|^2 \geq\frac{1}{2}\min\{\varepsilon, \gamma^2\varepsilon \} \right) \leq \frac{\Lambda}{2}, %\\
%&=\PP\left(2\max_{s = 1, \ldots, S}\|\tilde x_{R^{ s}} - \hat x_{R^{ s}}\|^2 \geq\frac{2S}{\Lambda} \cdot \frac{\Lambda}{4S}\min\{\varepsilon, \gamma^2\varepsilon \} \right) , 
\end{align*}
which shows that $x^\ast$ is an $(\varepsilon, \Lambda)$-solution. 
\end{proof}

When the second term in~\eqref{eq:epsLambda2} is dominating, the obtained bound~\eqref{eq:epsLambda2} is \break $\log_2(2/\Lambda)/\varepsilon\Lambda$ times smaller than the bound~\eqref{eq:epsLambda1} obtained by the PGSG algorithm.

\subsection{PGSG with Unknown Weak Convexity Constant}\label{sec:weakconvexityunknown} Algorithm~\ref{alg:PGSG} requires that the parameters $\varepsilon$, $L$, and $\rho$ are known. In practice, computing $L$ and $\rho$ may be nontrivial. In this section we show that a simple strategy---letting $j_t$ tend to infinity and $\gamma_t$ tend to zero---results in a sublinear convergence rate without knowledge of any problem parameters. We formalize this procedure in Algorithm~\ref{alg:PGSG-free} using the following parameters: fix a  hyper-parameter $0<\beta<1$, and define 
\begin{align*}
	\gamma_t &:= (t+1)^{-\beta}; \numberthis\label{eq:diminishing-gamma}\\
		j_t &:=  t + 44; \numberthis\label{eq:diminishing-j_t}\\
		\alpha_{t,j} &:= \frac{4\gamma_t}{j +1 + \frac{288}{j+1}}. \numberthis\label{eq:diminishing-alpha}	
\end{align*}
The algorithm now follows. 
\begin{algorithm}[H]
	\caption{Parameter Free Proximally Guided Stochastic Subgradient Method $ \mathrm{PFPGSG}(y_0, G, \{\gamma_t\}, \{\alpha_t\}, \{j_t\}, T)$ \label{alg:PGSG-free}}
	\begin{algorithmic}[1]
		\Require{$x_0 \in \cX$,  $\{\gamma_t\}$ satisfying~\eqref{eq:diminishing-gamma}, maximum iterations $T \in \NN$, stepsize sequence $\{\alpha_t\}$  \text{as in~\eqref{eq:diminishing-alpha}},  maximum inner loop iteration $\{j_t\}$ \text{satisfying~\eqref{eq:diminishing-j_t}}.}
		\For{$t  = 0, \ldots, T-2$}
		\State{$x_{t+1} = \mathrm{PSSM}(x_t, G, \gamma_t, \{\alpha_t\}, j_t)$}
		\EndFor
		\Ensure{$x_R$, where $R$ is sampled with probability $\PP(R = t) \propto \gamma_t$ from $\{0, \ldots, T-1\}$.}
	\end{algorithmic}
\end{algorithm}

In the following, we establish convergence guarantees for the parameter free variant of PGSG. The proof splits the analysis of PFPGSG into two parts. In the first part, $\gamma_t \geq 1/\rho$. In this setting, the analysis of the previous section does not apply. Thus, we show that that the iterates do not wander very far. In the second part, $\gamma_t \leq 1/\rho$, and an argument similar to the one presented in Theorem~\ref{thm:Convergence-With-Log} applies. Combining these results then leads to the theorem. To that end, we address the first part now. 
\begin{lemma}
Let $T_0 = \lceil (2\rho)^{1/\beta}\rceil$. Then 
$$
\EE\left[ F(x_{T_0})\right] \leq  F(x_0) + L^2 T_0 \log(T_0 + 125).
$$
\end{lemma}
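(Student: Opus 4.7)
The plan is to control the growth of $F$ across the $T_0$ outer iterations by combining the $L$-Lipschitz continuity of $f$ with the steplength bound for PSSM that does not require the inner subproblem to be strongly convex. This matters because, in the regime $t < T_0$, we may have $\gamma_t \geq 1/\rho$ so that $\mu = \gamma_t^{-1} - \rho \leq 0$ and the strongly convex analysis behind Proposition~\ref{prop:Inner-Convergence-With-Log} (first three conclusions) does not apply.

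First, since every outer iterate $x_t$ lies in $\cX$ (PSSM ends with a projection), Lemma~\ref{lem:Lipschitz} gives $F(x_{t+1}) - F(x_t) \leq L\,\|x_{t+1} - x_t\|$. To control $\EE\|x_{t+1} - x_t\|$ I invoke the \emph{last} conclusion of Proposition~\ref{prop:Inner-Convergence-With-Log}, whose only hypothesis is $0 < \alpha_{t,j} \leq 2\gamma_t$. From the definition $\alpha_{t,j} = 4\gamma_t/(j+1 + 288/(j+1))$ and AM-GM, $j+1 + 288/(j+1) \geq 2\sqrt{288} > 2$, so the hypothesis holds with room to spare; the proposition then yields $\EE\|x_{t+1} - x_t\| \leq L \sum_{i=0}^{j_t-1}\alpha_{t,i}$. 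Combining these two inequalities and telescoping over $t = 0,\ldots,T_0-1$ (after taking total expectations) produces the key bound
\begin{equation*}
\EE[F(x_{T_0})] - F(x_0) \;\leq\; L^2 \sum_{t=0}^{T_0 - 1} \sum_{i=0}^{j_t - 1} \alpha_{t,i}.
\end{equation*}

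Next I bound the double sum. Substituting $\alpha_{t,i} = 4\gamma_t (i+1)/((i+1)^2 + 288)$ and $j_t = t + 44$, and reparametrizing $k = i+1$, the inner sum becomes $4\gamma_t \sum_{k=1}^{t+44} k/(k^2+288)$. The integrand $x/(x^2+288)$ has antiderivative $\tfrac{1}{2}\log(x^2+288)$, is increasing on $[0,\sqrt{288}]$ and decreasing beyond; splitting the sum at $k = 17$ and applying the obvious integral comparison on each piece yields $\sum_{k=1}^{n} k/(k^2+288) \leq \tfrac{1}{2}\log(n^2+288) + C_0$ for a small absolute constant $C_0$. Using $\gamma_t \leq 1$, bounding $t+44 \leq T_0 + 43$, and summing over $t$ yields a bound of the form $C_1\, T_0 \log(T_0 + C_2)$ for explicit constants, which can be absorbed into $T_0 \log(T_0 + 125)$ (the constant $125$ being the generous additive slack chosen precisely to swallow the constants from the integral comparison together with the factor $4$ in $\alpha_{t,j}$). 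This gives the claimed bound $\EE[F(x_{T_0})] \leq F(x_0) + L^2 T_0 \log(T_0 + 125)$.

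The main obstacle is pure bookkeeping of the constants: verifying that the specific additive constant $125$ inside the log is indeed large enough to absorb both the $O(1)$ term from the integral comparison and the multiplicative factor stemming from the constants $4$ and $288$ that appear in the stepsize $\alpha_{t,j}$. Conceptually, however, the argument is elementary; the shape of the final estimate is dictated entirely by the fact that the inner stepsums $\sum_{i=0}^{j_t-1}\alpha_{t,i} = \Theta(\gamma_t \log j_t)$ are logarithmic, so when one can only exploit the crude "no-strong-convexity" displacement bound from Proposition~\ref{prop:Inner-Convergence-With-Log}, the accumulated $F$-increase is at most $O(T_0 \log T_0)$ in expectation.
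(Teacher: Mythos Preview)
Your proposal is correct and follows essentially the same route as the paper: use Lipschitz continuity of $f$ together with the last (non-strongly-convex) displacement bound in Proposition~\ref{prop:Inner-Convergence-With-Log} to telescope, arriving at $\EE[F(x_{T_0})]-F(x_0)\le L^2\sum_{t=0}^{T_0-1}\sum_{j=0}^{j_t-1}\alpha_{t,j}$, and then bound the double sum. The only difference is cosmetic: the paper immediately uses the crude estimate $\alpha_{t,j}\le 4\gamma_t/(j+1)\le 4/(j+1)$ and a harmonic-sum bound, whereas you keep the exact form $4\gamma_t(j+1)/((j+1)^2+288)$ and do an integral comparison; both reductions yield the same $O(T_0\log T_0)$ shape.
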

\begin{proof}
By Proposition~\ref{prop:Inner-Convergence-With-Log}, as $\alpha_j < 2\gamma_t$, we have we have $\EE_{T_0}\|x_{t+1} - x_{t}\| \leq L\sum_{j= 0}^{j_t-1} \alpha_j$ for all $t = 0, \ldots, T_0-1$.   	\begin{align}
 		 \EE_{T_0}F(x_{T_0}) & \leq F(x_0) + L\EE_{T_0}\|x_{T_0} - x_0\|\nonumber\\
 		 & \leq F(x_0) + L\sum_{t=0}^{T_0-1}\EE_{T_0}\|x_{t+1} - x_t\|\nonumber\\
 		 %& \leq \phi(x_0) + L\sum_{t=0}^{T_0-1}\sum_{j=0}^{j_t-1} \EE_{\Xi_0}\|\alpha_{t,j}G(y_{j,t},\xi_{j,t}) + \frac{1}{\gamma_t}(y_{j,t} - x_t) -x_t\|\nonumber\\
 		 & \leq F(x_0) + L^2\sum_{t=0}^{T_0-1}\sum_{j=0}^{j_t-1} \alpha_{t,j}\nonumber\\
		 & \leq F(x_0) + L^2\sum_{t=0}^{T_0-1}\sum_{j=0}^{j_t-1} \frac{4}{j+1}\nonumber\\
 		 & \leq F(x_0) + L^2T_0\log(T_0+125) \label{eq:objective-bound},
 	\end{align}
as desired. 
\end{proof}

We now address the second part of the argument, and with it, deduce the following theorem. At first glance, the presented rate appears to be better than the rate obtained by Algorithm~\ref{alg:PGSG}, which requires knowledge of $\rho$. However, it is not because the factor $\gamma_R^{-2} = (R+1)^{2\beta}$ is no longer a constant. Instead, the convergence rate of Algorithm~\ref{alg:PGSG-free} is on the order of $O(T^{1-\beta})$ in the worst case.

\begin{theorem}[Convergence of Parameter Free PGSG]\label{thm:PF-PGSG} Let $T_0 = \lceil (2\rho)^{1/\beta}\rceil$. And consider any $T \in \NN$. Let $x_R = \mathrm{PFPGSG}(x_0, G, \{\gamma_t\}, \{\alpha_t\}, \{j_t\}, T)$. Define the quantity
\begin{align*}
\cC_{T, \{j_t\}} := \frac{8(1+\beta)}{(T+1)^{1+\beta}}\left(F(x_{0}) - \inf F+ (144C  +  T_0 \log(T_0 + 125) + \tfrac{T_0}{2})L^2\right),
\end{align*}
where $C:= \sum_{t=T_0}^{\infty} t^{-1-\beta} < \infty$. Then $\EE\|x_R- \hat x_R\|^2 \leq \cC_{T, \{j_t\}}$. Consequently,  we have the following bound: 
\begin{align*}
\EE\left[\dist(x_R, \{ x \mid \dist(0, \partial F(x))^2 \leq (R+1)^{2\beta}\cC_{T, \{j_t\}}\} )^2\right] \leq \cC_{T, \{j_t\}}.
\end{align*}
\end{theorem}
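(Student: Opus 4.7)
The plan is to mirror the proof of Theorem~\ref{thm:Convergence-With-Log}, with the horizon split at the threshold $T_0 = \lceil(2\rho)^{1/\beta}\rceil$ at which $\gamma_t = (t+1)^{-\beta}$ first drops below $1/(2\rho)$. For $t < T_0$, the proximal subproblem is not guaranteed to be strongly convex and the usual descent is unavailable; the preceding lemma already handles this warm-up phase, yielding $\EE[F(x_{T_0})] \leq F(x_0) + L^2 T_0 \log(T_0+125)$ via the ``bounded steplengths'' estimate at the end of Proposition~\ref{prop:Inner-Convergence-With-Log} (applicable because the new schedule satisfies $\alpha_{t,j} \leq 2\gamma_t$ for all $t,j$).

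For $t \geq T_0$ we have $\gamma_t \leq 1/(2\rho)$, hence $\mu_t := \gamma_t^{-1} - \rho \in [\gamma_t^{-1}/2, \gamma_t^{-1}]$ and in particular $\gamma_t\mu_t \in [1/2,1]$. Repeating the proof of Theorem~\ref{thm:Convergence-With-Log}, Proposition~\ref{prop:Inner-Convergence-With-Log} combined with the strong-convexity bound $F(\hat x_t) \leq F(x_t) - (1/(2\gamma_t)+\mu_t/2)\|x_t - \hat x_t\|^2$ yields the one-step inequality
\begin{align*}
\EE_t[F(x_{t+1})] \leq F(x_t) + \frac{72L^2}{\mu_t(j_t+1)} - \left(\frac{\mu_t}{2} - \frac{30}{\gamma_t^4\mu_t^3 j_t(j_t+1)}\right)\|x_t - \hat x_t\|^2.
\end{align*}
Since $1/(\gamma_t\mu_t)^4 \leq 16$ and $j_t = t+44 \geq 44$ force $j_t(j_t+1) \geq 1980 > 30\cdot 16\cdot 4$, the quadratic coefficient is at least $\mu_t/4$. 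Telescoping from $T_0$ to $T-1$ and feeding in the warm-up bound produces
\begin{align*}
\sum_{t=T_0}^{T-1} \mu_t\,\EE\|x_t - \hat x_t\|^2 \leq 4\bigl(F(x_0) - \inf F + L^2 T_0\log(T_0+125)\bigr) + 288 L^2 \sum_{t=T_0}^{T-1}\frac{1}{\mu_t(j_t+1)},
\end{align*}
and $1/\mu_t \leq 2\gamma_t$ bounds the residual sum by $2C$, where $C = \sum_{t=T_0}^{\infty} t^{-1-\beta} < \infty$.

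To convert this into a bound on $\EE\|x_R - \hat x_R\|^2$, I would sample $R$ with weight proportional to $\mu_t$ (equivalently, up to a factor of two in the post-$T_0$ regime, proportional to $\gamma_t^{-1} = (t+1)^\beta$), so that the telescoped sum appears directly in the numerator. An integral comparison gives $\sum_{t=0}^{T-1}(t+1)^\beta \geq (T+1)^{1+\beta}/(1+\beta)$ for the denominator, producing the $8(1+\beta)/(T+1)^{1+\beta}$ prefactor of $\cC_{T,\{j_t\}}$. The pre-$T_0$ contribution to the numerator is handled using Lemma~\ref{lem:prox_point_bound}: $\|x_t-\hat x_t\|^2 \leq 4L^2\gamma_t^2$, so $\mu_t\|x_t-\hat x_t\|^2 \leq 4L^2$, summing to the $T_0/2$ constant. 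A final application of Lemma~\ref{eq:dist_to_critical} at $t = R$, where $\gamma_R^{-2} = (R+1)^{2\beta}$, gives the claimed distance-to-stationarity bound.

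The principal obstacle is the constant bookkeeping across the two regimes---in particular, verifying that the altered stepsize schedule $\alpha_{t,j} = 4\gamma_t/(j+1+288/(j+1))$ (which differs from the one in Proposition~\ref{prop:Inner-Convergence-With-Log} but is equivalent up to multiplicative constants when $\gamma_t\mu_t \in [1/2,1]$) reproduces the coefficients above uniformly for $t \geq T_0$, so that the three summands $144C$, $T_0\log(T_0+125)$, and $T_0/2$ emerge with exactly the stated form.
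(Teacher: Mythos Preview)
Your outline matches the paper's proof almost exactly: split at $T_0$, use the preceding lemma for the warm-up, derive the one-step descent for $t\ge T_0$ with coefficient at least $\mu_t/4$, telescope, bound the residual sum by $C$, handle the pre-$T_0$ terms via Lemma~\ref{lem:prox_point_bound}, and finish with the integral comparison and Lemma~\ref{eq:dist_to_critical}.

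The one place to tighten is your choice of weight. You propose sampling $R\propto\mu_t$ and bounding the pre-$T_0$ numerator terms as $\mu_t\|x_t-\hat x_t\|^2\le 4L^2$; but for $t<T_0$ one may have $\gamma_t>1/\rho$, so $\mu_t=\gamma_t^{-1}-\rho$ can be negative (and $\hat x_t$ need not even be unique), which breaks both the probability interpretation and that inequality. The paper avoids this by immediately replacing $\mu_t/4$ with its lower bound $1/(8\gamma_t)$ on the left of the telescoped inequality, so that the weight is $\gamma_t^{-1}$ over the \emph{entire} range $0,\ldots,T-1$ (this is also what the algorithm actually samples from). The pre-$T_0$ excess is then
\[
\sum_{t=0}^{T_0-1}\frac{1}{8\gamma_t}\EE\|x_t-\hat x_t\|^2 \;\le\; \sum_{t=0}^{T_0-1}\frac{1}{8\gamma_t}\cdot 4L^2\gamma_t^2 \;=\; \sum_{t=0}^{T_0-1}\frac{\gamma_t L^2}{2} \;\le\; \frac{T_0 L^2}{2},
\]
which is exactly the $T_0/2$ term. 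With that substitution your argument and the paper's coincide.
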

\begin{proof}
Suppose that $t \geq T_0$ and notice this ensures $\gamma_t \in (0, 1/\rho)$. Following an argument nearly identical to the proof of Theorem~\ref{thm:Convergence-With-Log}, we find that for all $t \geq T_0$, we have
\begin{align*}
\EE_{t}\left[F\left(x_{t+1}\right) \right] 
	&\leq F(x_t) + \frac{72L^2}{\mu_t(j_t+1)} - \left(\frac{\mu_t}{2} - \frac{30}{\gamma_t^4\mu_t^3j_t(j_t+1)}\right)\|x_t - \hat x_t\|^2,
\end{align*}
where $\mu_t = \gamma_t^{-1} - \rho$ and $\EE_t\left[\cdot\right]$ denotes the expectation conditioned on $x_1, \ldots, x_t$. We now show that the coefficient of $-\|x_t - \hat x_t\|^2$ is greater than or equal to $\mu_t/4$. Indeed, it suffices to show that $j_t \geq 12/(1-\gamma_t \rho)^2$. To that end, note that
$$
\gamma_t \leq (\lceil (2\rho)^{1/\beta}\rceil + 1)^{-\beta} \leq 1/(2\rho).
$$
Therefore, $1- \gamma_t \rho \geq 1/2$, which leads to the claimed inequality: $12/(1-\gamma_t \rho)^2 \leq 44 \leq j_t$. 

Using the lower bound $\mu_t \geq 1/(2\gamma_t)$ (which follows because $t \geq T_0$), we thus find 
\begin{align*}
\sum_{t=T_0}^{T-1}\frac{1}{8\gamma_t} \EE\left[\|x_t - \hat x_t\|^2 \right] &\leq\EE\left[F(x_{T_0}) - \inf F\right] + \sum_{t=T_0}^{T-1} \frac{144\gamma_tL^2}{(j_t+1)} \\
&\leq F(x_{0}) - \inf F+ \sum_{t=T_0}^{T-1} \frac{144\gamma_tL^2}{(j_t+1)}  + L^2 T_0 \log(T_0 + 125).
\end{align*}
We would like to extend the sum on the left hand side of the previous inequality to all $t$ between $0$ and $T-1$. To that end, we bound the excess terms 
\begin{align*}
\sum_{t=0}^{T_0 - 1}\frac{1}{8\gamma_t}\EE\left[\|x_t - \hat x_t\|^2 \right] \leq \sum_{t=0}^{T_0 - 1}\frac{\gamma_tL^2}{2} \leq \frac{T_0L^2}{2}.
\end{align*}

Therefore, using the bounds $\sum_{t=T_0}^{\infty}\gamma_t/(j_t+1) \leq \sum_{t=T_0}^{\infty} t^{-1-\beta} = C < \infty$ and $\sum_{t=0}^{T-1} \gamma_t^{-1} \geq \int_{-1}^{T-1}(t+1)^\beta dt = T^{1+\beta}/(1+\beta)$, we have, 
\begin{align*}
&\EE\left[\|x_R - \hat x_R\|^2 \right]\\
&=\frac{1}{\sum_{t=0}^{T-1} \gamma_t^{-1}}\sum_{t=0}^{T-1}\frac{1}{\gamma_t} \EE\left[\|x_t - \hat x_t\|^2 \right] \\
&\hspace{20pt}\leq \frac{8(1+\beta)}{(T+1)^{1+\beta}}\left(F(x_{0}) - \inf F+ 144CL^2  + L^2 T_0 \log(T_0 + 125) + \frac{T_0L^2}{2}\right),
\end{align*}
as desired. To complete the proof, apply Lemma~\ref{eq:dist_to_critical}.
\end{proof}

\section{Experimental Results} \label{sec:experiments}

In this section we address the population version of the robust real phase retrieval problem: fix a vector $\bar x \in \RR^d$ and define
\begin{align}\label{eq:rob_phase}
F(x) := \EE_{a, \delta, \xi}\left[ | \dotp{a, x}^2 - (\dotp{a, \bar x}^2 + \delta \cdot \xi)|\right],
\end{align}
where $a, \delta,$ and $\xi$ are independent random variables satisfying the following assumptions
\begin{enumerate}
\item[(B1)] $a$ is a zero mean standard Gaussian random variable in $\RR^d$; 
\item[(B2)]  $\delta$ is a $\{0, 1\}$-random variable with $P(\delta = 1) = 0.25$; 
\item[(B3)] $\xi$ is a zero mean Laplace random variable with scale parameter $1$.
\end{enumerate}
In this setting, it is possible to show that the only minimizers of $F(x)$ are $\pm \bar x$~\cite[Lemma B.8]{davis2017nonsmooth}. In Lemma~\ref{lem:weak_phase}, we show that this function is 2-weakly convex.

\textbf{Implementation.} Each step of PGSG and the stochastic subgradient method requires access to a subgradient of a random function of the form
$$
f(x, a, \delta, \xi) = | \dotp{a, x}^2 - (\dotp{a, \bar x}^2 + \delta \cdot \xi)|.
$$
We choose the selection operator 
$$
G(x, a, \delta, \xi) = 2\dotp{a, x} a\cdot  \sign(\dotp{a, x}^2 - (\dotp{a, \bar x}^2 + \delta \cdot \xi)) \in \partial_x f(x, a, \delta, \xi).
$$
It is a straightforward exercise to show that $G$ satisfies assumption~\ref{assumption:main} on any bounded set $\cX$. For our purposes we choose $\cX$ to be a closed ball with a large radius, $r = 10^6$. In our experiments, we never had to explicitly enforce this constraint. 

\textbf{Experiment 1: Sensitivity to Stepsize.} In the first experiment we compare the performance of PGSG to the stochastic subgradient method, which possessed no complexity guarantees at the time of writing this manuscript. In the stochastic subgradient method, we choose stepsizes of the form $\gamma/(t+10)^\beta$ for varying $\gamma>0$ and $\beta\in\{1/2,1\}$. For PGSG, we chose varying values of $\gamma>0$ and then set $\alpha_j$ by~\eqref{eq:alpha}, $j_t = 250$, and $\mu = 1/2\gamma$.
Figure~\ref{fig:plot} shows the result of running these two methods to solve robust real phase retrieval problems with $d=50$.

\begin{figure}[t]
	\centering
	\begin{tabular}{cc}
		\includegraphics[width=.46\textwidth]{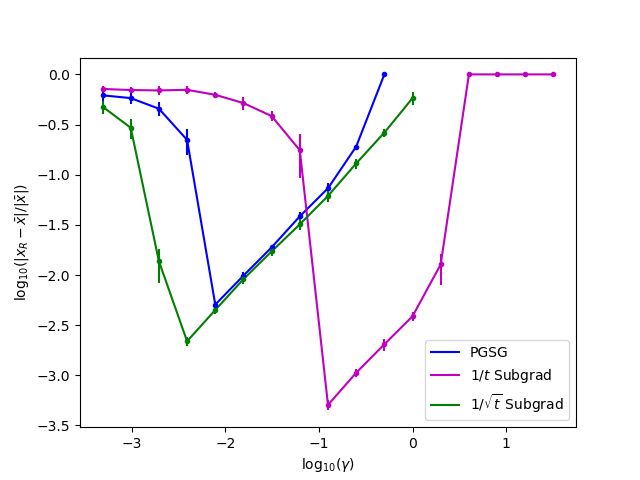} &
		\includegraphics[width=.46\textwidth]{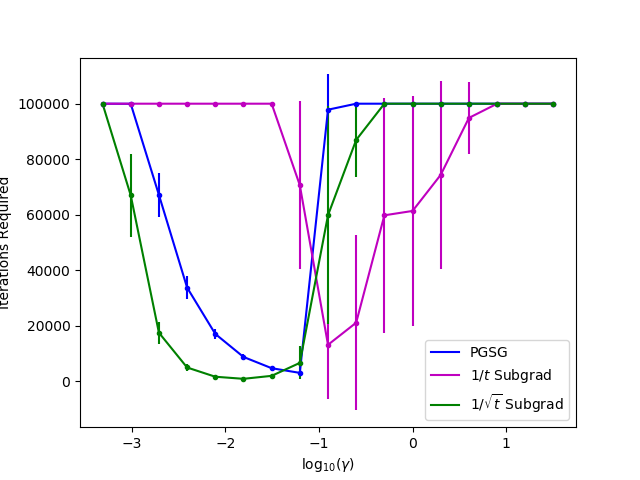} \\
		(a) & (b)\\		
	\end{tabular}
	\caption{Performance of PGSG and the subgradient method for values of $\gamma$ averaged over 50 trials. Error bars are included to show one standard deviation. Plot (a) shows the relative distance to a minimizer after 25000 subgradient evaluations. Plot (b) shows the number of subgradient evaluations needed until the relative distance $0.05$ to a minimizer.}
	\label{fig:plot}
\end{figure}

\textbf{Experiment 2: Mean and Variance of Solution Estimates.} 
Unlike the subgradient method, PGSG provides an easily computed estimate of the of how close $x_{R}$ is to a nearly stationary point; see the discussion surrounding Lemma~\ref{eq:sub_dist} and Remark~\ref{remark:finding_stat}. For PGSG, 2PGSG, and PFPGSG, this is given by $\gamma^{-1}\|x_{R}-x_{R+1}\|$, $\gamma^{-1}\|x_{R^s}-\tilde{x}_{R^s}\|$, and $\gamma^{-1}_R\|x_{R}-x_{R+1}\|$ respectively. Proposition~\ref{prop:Inner-Convergence-With-Log} shows these estimates are close to $\gamma^{-1}\|x_R - \hat x_R\|$ in expectation, which, according to Lemma~\ref{eq:sub_dist}, is a natural measure of stationarity. Using these stationarity measures, we analyze the numerical performance of the three algorithms proposed in this manuscript.  

Based on the results of Experiment 1,  we set $\gamma=2^{-6}$ for the PGSG and 2-PGSG algorithms. We furthermore set  $\alpha_j$ by~\eqref{eq:alpha} and let $\mu = 1/2\gamma$. For both methods, we consider two different selections for the number of inner iterations $j_t \in\{10^3, 10^4\}$. These choices determine the level of stationarity reached by the algorithm. For 2PGSG, we fix $S=5$ and $J= 5T$. For PFPGSG, we set $\beta=1/2$, $\gamma_t = (t+1)^{-\beta}/10$ (which differs from~\eqref{eq:diminishing-gamma} by a factor of ten), $j_t$ as in~\eqref{eq:diminishing-j_t}, and $\alpha_j$ as in~\eqref{eq:diminishing-alpha}.

Table~\ref{tab:stationarity} lists the mean and variance of the stationarity measures averaged over $50$ trials. Each sub column shows the performance of the target algorithm as the computational budget increases. We find that with $j_t=10^3$, both PGSG and 2PGSG quickly converge to a region of stationarity and then do not improve.  With $j_t=10^4$, both of these methods reach a level of stationarity an order of magnitude smaller than with the choice $j_t=10^3$. Under sufficiently large computational budget ($2500000$ stochastic subgradient evaluations), the variance of the stationarity reported by 2PGSG is consistently lower than that of PGSG as expected from Theorem~\ref{thm:2phase}. Finally, we note that the performance of PFPGSG is similar to PGSG in most regimes.

%The mean and variance o
%
%f our estimate of stationarity for each method over $50$ trials is given in Table~\ref{tab:stationarity}. Considering the columns corresponding to each algorithm shows its performance as the computational budget increases. We find that with $j_t=10^3$, both PGSG and 2PGSG quickly converge to a region of stationarity and then do not improve. With $j_t=10^4$, both of these methods reach a level of stationarity an order of magnitude smaller than with $j_t=10^3$.
%Under sufficiently large computational budget ($2500000$ stochastic subgradient evaluations), the variance of the stationarity reported by 2PGSG is consistently lower than that of PGSG as expected from our theory. %The parameter-free variant seems to be competitive at every level of problem dimension and computational budget.

\begin{table} \label{tab:stationarity}
\begin{tabular}{|c|c|c|c|c|c|c|}
	\hline
	\multirow{2}{*}{Oracle Calls} & & PGSG & PGSG & 2PGSG & 2PGSG & \multirow{2}{*}{PFPGSG}\\
	& & $j_t=1000$ & $j_t=10000$ & $j_t=1000$ & $j_t=10000$ & \\\hline
	& & \multicolumn{5}{c}{$d=50$}\\\hline
	\multirow{2}{*}{100000} & mean & 1.538 & 10.02 & 1.099 & 12.46 & 2.877\\
	& var.  & 0.0380 & 1.683 & 0.0153 & 5.871 & 0.178\\\hline
	\multirow{2}{*}{500000} & mean & 1.492 & 0.2043 & 1.024 & 8.406 & 1.615\\
	& var.  & 0.0542 & 9.27e-4 & 0.0119 & 0.669 & 0.0421\\\hline
	\multirow{2}{*}{2500000} & mean & 1.575 & 0.2083 & 1.034 & 0.1331 & 0.847\\
	& var.  & 0.0600 & 7.53e-4 & 0.0152 & 2.562e-4 & 0.0128\\\hline
	
	& & \multicolumn{5}{c}{$d=100$}\\\hline
	\multirow{2}{*}{100000} & mean & 3.632 & 17.12 & 2.703 & 23.04 & 6.625\\
	& var.  & 0.137 & 3.287 & 0.0544 & 6.117 & 0.361\\\hline
	\multirow{2}{*}{500000} & mean & 3.579 & 3.678 & 2.534 & 11.83 & 3.815\\
	& var.  & 0.145 & 22.35 & 0.0430 & 0.891 & 0.145\\\hline
	\multirow{2}{*}{2500000} & mean & 3.622 & 0.540 & 2.564 & 0.365 & 2.121\\
	& var.  & 0.127 & 2.71e-3 & 0.0468 & 1.01e-3 & 0.0380\\\hline
	
	& & \multicolumn{5}{c}{$d=500$}\\\hline
	\multirow{2}{*}{100000} & mean & 27.67 & 76.86 & 24.32 & 100.7 & 41.65\\
	& var.  & 8.843 & 16.95 & 2.465 & 20.31 & 6.471\\\hline
	\multirow{2}{*}{500000} & mean & 25.53 & 23.52 & 17.13 & 42.25 & 25.59\\
	& var.  & 1.519 & 1.474 & 0.341 & 4.772 & 1.946\\\hline
	\multirow{2}{*}{2500000} & mean & 25.64 & 4.759 & 17.10 & 3.519 & 15.09\\
	& var.  & 1.236 & 0.0454 & 0.374 & 0.0118 & 0.452\\\hline
	
	& & \multicolumn{5}{c}{$d=1000$}\\\hline
	\multirow{2}{*}{100000} & mean & 64.73 & 156.5 & 34.36 & 199.5 & 59.25\\
	& var.  & 14.37 & 49.09 & 2.388 & 53.92 & 167.2 \\\hline
	\multirow{2}{*}{500000} & mean & 55.97 & 40.99 & 33.61 & 86.97 & 54.48 \\
	& var.  & 3.426 & 2.091 & 0.890 & 9.233 & 71.38 \\\hline
	\multirow{2}{*}{2500000} & mean & 55.27 & 11.88 & 33.40 & 9.008 & 33.86 \\
	& var.  & 4.854 & 0.119 & 0.634 & 0.055 & 1.350 \\\hline
\end{tabular}
\caption{Estimated stationarity level for each of the proposed algorithms averaged over $50$ trails.}
\end{table}

\appendix

\section{Trimmed Estimation}

\begin{proposition}
Suppose that $f_1, \ldots, f_n$ are convex, $L$-Lipschitz continuous functions on $\RR^d$. Then the objective 
\begin{align*}
F(w, x) = \begin{cases}
\frac{1}{n}\sum_{i=1}^n w_i f_i(x) & \text{if $w_i \in [0, 1]$ and $\sum_{i=1}^n w_i = h$;}\\
\infty & \text{otherwise.}
\end{cases}
\end{align*}
is $L$-weakly convex. 
\end{proposition}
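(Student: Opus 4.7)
The plan is to verify characterization (3) of Proposition~\ref{prop:generalweak}, i.e., the midpoint inequality with parameter $L$. If either $w$ or $w'$ lies outside the feasible set $C=\{w\in[0,1]^n:\sum_i w_i = h\}$, the right-hand side is infinite and the inequality is trivial. So fix $w, w' \in C$, $x, x' \in \RR^d$, and $\alpha\in [0,1]$. Since $C$ is convex, $\alpha w+(1-\alpha)w'\in C$, so both sides of the desired inequality are finite and it suffices to work coordinatewise in $i$.

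The crucial observation is that for each $i$ we have $\alpha w_i+(1-\alpha)w_i' \ge 0$, so convexity of $f_i$ gives
\begin{equation*}
(\alpha w_i+(1-\alpha)w_i')\,f_i(\alpha x+(1-\alpha)x') \;\le\; (\alpha w_i+(1-\alpha)w_i')\bigl(\alpha f_i(x)+(1-\alpha)f_i(x')\bigr).
\end{equation*}
A short expansion shows that the right-hand side equals $\alpha w_i f_i(x)+(1-\alpha)w_i' f_i(x') + \alpha(1-\alpha)(w_i-w_i')(f_i(x')-f_i(x))$, so subtracting off the target midpoint terms leaves only the symmetric cross term $\alpha(1-\alpha)(w_i-w_i')(f_i(x')-f_i(x))$.

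Summing over $i$ with the $1/n$ prefactor and using the Lipschitz bound $|f_i(x')-f_i(x)|\le L\|x'-x\|$ yields
\begin{equation*}
F(\alpha(w,x)+(1-\alpha)(w',x'))-\alpha F(w,x)-(1-\alpha)F(w',x') \;\le\; \frac{L\alpha(1-\alpha)}{n}\sum_{i=1}^n |w_i-w_i'|\,\|x'-x\|.
\end{equation*}
Applying $2ab\le a^2+b^2$ to each summand produces $\sum_i|w_i-w_i'|\|x'-x\|\le \tfrac12\|w-w'\|^2+\tfrac{n}{2}\|x'-x\|^2$, and dividing by $n$ bounds the right-hand side above by $\tfrac{L\alpha(1-\alpha)}{2}(\|w-w'\|^2+\|x'-x\|^2)$, which is exactly the $L$-weak convexity midpoint inequality in the joint norm on $\RR^n\times\RR^d$.

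There is no genuine obstacle; the only step that requires care is ensuring $\alpha w_i+(1-\alpha)w_i'\ge 0$ before multiplying through Jensen's inequality in the opposite direction, and this is precisely why the constraint $w\in C$ is imposed (relaxing $w_i\in[0,1]$ to $w_i\in\RR$ would destroy weak convexity since $w_if_i(x)$ becomes concave in $x$ when $w_i<0$). The factor $1/n$ in front of the sum is what permits the final reduction to the constant $L$; without it, the same argument would give a weak convexity constant growing with $n$.
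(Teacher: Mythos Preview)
Your proof is correct and follows essentially the same route as the paper's: both verify the approximate midpoint inequality of Proposition~\ref{prop:generalweak}, apply convexity of each $f_i$ using the nonnegativity of the combined weight, isolate the same cross term $\alpha(1-\alpha)(w_i-w_i')(f_i(x')-f_i(x))$, and then bound it via the Lipschitz condition and the elementary inequality $2ab\le a^2+b^2$. The only cosmetic difference is that the paper splits the coefficient into $(1-\lambda)w_i$ and $\lambda\tilde w_i$ before applying Jensen, whereas you apply it once with the combined coefficient; the resulting algebra is identical.
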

\begin{proof}
We argue using Proposition~\ref{prop:generalweak}. Let $(w, x), (\tilde w, \tilde x) \in \dom ~F$ and let $\lambda \in [0, 1]$. Then 
\begin{align*}
&F( (1-\lambda)(w, x) + \lambda(\tilde w, \tilde x)) \\
&=\frac{1}{n} \sum_{i=1}^n  ((1-\lambda)w_i + \lambda\tilde w_i) f_i( (1-\lambda)x + \lambda \tilde x)\\
&= \frac{1}{n}\sum_{i=1}^n  (1-\lambda)w_i  f_i( (1-\lambda)x + \lambda \tilde x) +\frac{1}{n} \sum_{i=1}^n  \lambda\tilde w_i f_i( (1-\lambda)x + \lambda \tilde x)\\
&\leq \frac{1}{n}\sum_{i=1}^n  (1-\lambda)w_i  ((1-\lambda)f_i( x )+ \lambda f_i( \tilde x)) + \frac{1}{n}\sum_{i=1}^n   \lambda\tilde w_i ((1-\lambda)f_i( x )+ \lambda f_i( \tilde x))\\
&= \frac{1}{n}\sum_{i=1}^n  (1-\lambda)w_i f_i( x )+\frac{1}{n} \sum_{i=1}^n\lambda (1-\lambda)w_i  (f_i( \tilde x) - f_i(x)) \\
&\hspace{20pt}+ \frac{1}{n}\sum_{i=1}^n   \lambda\tilde w_if_i(\tilde w_i) + \frac{1}{n}\sum_{i=1}^n   \lambda(1-\lambda)\tilde w_i ((f_i( x )-  f_i( \tilde x))\\
&= (1-\lambda) F(w, x) + \lambda F(\tilde w, \tilde x)+ \frac{1}{n}\lambda(1-\lambda)\sum_{i=1}^n  ( \tilde w_i - w_i) ((f_i( x )-  f_i( \tilde x))\\
&\leq (1-\lambda) F(w, x) + \lambda F(\tilde w, \tilde x)  + \frac{\lambda(1-\lambda)L}{2}\|w - \tilde w\|^2 + \frac{\lambda(1-\lambda)L}{2}\|x- \tilde x\|^2,
\end{align*}
as desired. 
\end{proof}

\section{Weak Convexity of Robust Phase Retrieval}

\begin{lemma}\label{lem:weak_phase}
The robust phase retrieval loss defined in~\eqref{eq:rob_phase} is $2$-weakly convex.
\end{lemma}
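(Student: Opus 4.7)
The plan is to verify item~3 of Proposition~\ref{prop:generalweak} with $\rho = 2$, namely to establish the midpoint inequality
\begin{align*}
F(\alpha x + (1-\alpha) y) \leq \alpha F(x) + (1-\alpha) F(y) + \alpha(1-\alpha)\|x - y\|^2
\end{align*}
for all $\alpha \in [0,1]$ and $x, y \in \RR^d$. The work will be done first at the level of a single realization $(a, \delta, \xi)$ and then averaged.

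First I would record the algebraic identity
\begin{align*}
\dotp{a, \alpha x + (1-\alpha) y}^2 = \alpha \dotp{a, x}^2 + (1-\alpha) \dotp{a, y}^2 - \alpha(1-\alpha) \dotp{a, x-y}^2,
\end{align*}
which is immediate from expanding the left-hand side. Writing $c := \dotp{a,\bar x}^2 + \delta\xi$, subtracting $c$ from both sides and applying the triangle inequality then gives, for each realization,
\begin{align*}
|\dotp{a, \alpha x + (1-\alpha) y}^2 - c| \leq \alpha |\dotp{a, x}^2 - c| + (1-\alpha) |\dotp{a, y}^2 - c| + \alpha(1-\alpha) \dotp{a, x-y}^2,
\end{align*}
where the sign on the last term is absorbed using $\dotp{a, x-y}^2 \geq 0$.

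Next I would take expectation over $(a, \delta, \xi)$. The first two terms on the right reproduce $\alpha F(x) + (1-\alpha) F(y)$. For the residual term, assumption (B1) says $a$ is a standard Gaussian in $\RR^d$, so $\EE[aa^T] = I$ and hence $\EE \dotp{a, x-y}^2 = \|x-y\|^2$. Combining these identities yields exactly the midpoint inequality above with $\rho = 2$, and hence $2$-weak convexity by Proposition~\ref{prop:generalweak}.

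The subtlety worth flagging is why the naive route fails: the Jacobian of $c_a(x) := \dotp{a,x}^2$ has Lipschitz constant $2\|a\|^2$, so a pointwise appeal to the convex composite weak convexity estimate (cited immediately before Lemma~\ref{lem:weak_phase}) would yield a realization-dependent constant proportional to $\|a\|^2$ and, after averaging, a dimension-dependent $2d$-weak convexity estimate. The quadratic-identity route sidesteps this by producing the convexity defect $\dotp{a, x-y}^2$ from exact algebra rather than from a Hessian bound, so that the normalization $\EE[aa^T] = I$ collapses the defect to $\|x-y\|^2$ and eliminates the dimension dependence.
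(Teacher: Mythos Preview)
Your proposal is correct and follows essentially the same approach as the paper: both proofs use the exact quadratic identity $\dotp{a, \alpha x + (1-\alpha) y}^2 = \alpha \dotp{a, x}^2 + (1-\alpha) \dotp{a, y}^2 - \alpha(1-\alpha) \dotp{a, x-y}^2$, apply the triangle inequality pointwise, take expectations, and invoke $\EE_a\dotp{a, x-y}^2 = \|x-y\|^2$ to conclude via Proposition~\ref{prop:generalweak}. Your closing remark on why the generic convex-composite bound would yield only a $2d$-weakly convex estimate is a useful addition not present in the paper.
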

\begin{proof}
For all $x, y, a \in \RR^d$, we have
$$
\dotp{a, \lambda x + (1-\lambda) y}^2 = \lambda \dotp{a, x}^2 + (1-\lambda)\dotp{a, y}^2 - \lambda(1-\lambda)\dotp{a, y - x}^2.
$$
Thus, we have
\begin{align*}
&F(\lambda x + (1-\lambda )y) \\
&= \EE_{a, \delta, \xi}\left[ | \dotp{a,  \lambda x + (1-\lambda)y}^2 - (\dotp{a, \bar x}^2 + \delta \cdot \xi)|\right]\\
&\leq \lambda F(x) + (1-\lambda) F(y) + \lambda(1-\lambda)\EE_a\left[ \dotp{a, y-x}^2 \right] \\
&= \lambda F(x) + (1-\lambda) F(y) + \lambda(1-\lambda)\|x - y\|^2.
\end{align*}
Therefore, by Proposition~\ref{prop:generalweak}, $F$ is $2$-weakly convex. 
\end{proof}

\section*{Acknowledgments}
We thank Dmitriy Drusvyatskiy, George Lan, and the two anonymous reviewers for helpful comments.

\bibliographystyle{siamplain}
\bibliography{references}
\end{document}

% --- supplement: ex_supplement.tex ---

\maketitle

\section{A detailed example}

Here we include some equations and theorem-like environments to show
how these are labeled in a supplement and can be referenced from the
main text.
Consider the following equation:
\begin{equation}
  \label{eq:suppa}
  a^2 + b^2 = c^2.
\end{equation}
You can also reference equations such as \cref{eq:matrices,eq:bb} 
from the main article in this supplement.

\lipsum[100-101]

\begin{theorem}
  An example theorem.
\end{theorem}

\lipsum[102]
 
\begin{lemma}
  An example lemma.
\end{lemma}

\lipsum[103-105]

Here is an example citation: \cite{KoMa14}.

\section[Proof of Thm]{Proof of \cref{thm:bigthm}}
\label{sec:proof}

\lipsum[106-114]

\section{Additional experimental results}
\Cref{tab:foo} shows additional
supporting evidence. 

\begin{table}[htbp]
  \caption{Example table}
  \label{tab:foo}
  \centering
  \begin{tabular}{|c|c|c|} \hline
   Species & \bf Mean & \bf Std.~Dev. \\ \hline
    1 & 3.4 & 1.2 \\
    2 & 5.4 & 0.6 \\ \hline
  \end{tabular}
\end{table}

\bibliographystyle{siamplain}
\bibliography{references}